\DeclareMathAlphabet{\mathsfsl}{OT1}{cmss}{m}{sl}
\newtheorem{thm}{Theorem}[section]
\newtheorem{lem}[thm]{Lemma}
\newtheorem{cor}[thm]{Corollary}
\newtheorem{prop}[thm]{Proposition}
\newtheorem{Thm}{Theorem}
\newtheorem{Prop}[Thm]{Proposition}
\theoremstyle{definition}
\newtheorem{defn}[thm]{Definition}
\newtheorem{rem}[thm]{Remark}
\numberwithin{equation}{section}
\newcommand{\Z}{\mathbb{Z}}
\newcommand{\spinc}{${\rm Spin}^c$ }
\newcommand\CFKi{CFK^\infty}
\newcommand\bbF{\mathbb{F}}
\begin{document}

\title{Four-ball genus bounds and a refinement of the Ozsv\'ath-Szab\'o tau-invariant}

\author{ {\large Jennifer HOM}\\{\normalsize Department of Mathematics, Columbia University}\\
{\normalsize 2990 Broadway, New York, NY 10027}\\{\small\it Emai\/l\/:\quad\rm hom@math.columbia.edu} \\ \\
\and {\large Zhongtao WU}\\{\normalsize Department of Mathematics, The Chinese Universiy of Hong Kong}\\
{\normalsize Lady Shaw Building, Shatin, Hong Kong}\\{\small\it Emai\/l\/:\quad\rm ztwu@math.cuhk.edu.hk}}

\date{}
\maketitle
\begin{abstract}

Based on work of Rasmussen \cite{RasThesis}, we construct a concordance invariant associated to the knot Floer complex, and exhibit examples in which this invariant gives arbitrarily better bounds on the $4$-ball genus than the Ozsv\'ath-Szab\'o $\tau$ invariant.

\end{abstract}

\section{Introduction}
The \emph{$4$-ball genus} of a knot $K \subset S^3$ is
\[ g_4(K) = \min \{ g(\Sigma) \mid \Sigma \textup{ smoothly embedded in } B^4 \textup{ with } \partial \Sigma = K \}, \]
where $g(\Sigma)$ denotes the genus of the surface $\Sigma$. The $4$-ball genus gives a lower bound on the unknotting number of a knot (that is, the minimal number of crossing changes needed to obtain the unknot). We say knots $K_1$ and $K_2$ are \emph{concordant} if $g_4(K_1 \# -K_2) = 0$, where $-K_2$ denotes the reverse of the mirror image of $K_2$.

In \cite{OSz4Genus}, Ozsv\'ath-Szab\'o defined a concordance invariant, $\tau$, that gives a lower bound for the $4$-ball genus of a knot. This invariant is sharp on torus knots, giving a new proof of the Milnor conjecture, originally proved by Kronheimer-Mrowka using gauge theory \cite{KronMrowka}

The knot Floer homology package \cite{OSknots, RasThesis} associates to a knot $K$ a $\Z \oplus \Z$-filtered chain complex over the ring $\bbF[U, U^{-1}]$, where $\bbF$ denotes the field of two elements and $U$ is a formal variable. We denote this complex $\CFKi(K)$. The invariant $\tau$ depends only on a single $\Z$-filtration, and forgets the module structure. By studying the module structure together with the full $\Z \oplus \Z$-filtration, we obtain a concordance invariant, $\nu^+$, which gives a better bound on the $4$-ball genus than $\tau$, in the sense that
\begin{equation}
	\tau(K) \leq \nu^+(K) \leq g_4(K).
\end{equation}
Moreover, the gap between $\tau$ and $\nu^+$ can be made arbitrarily large.

\begin{Thm}\label{betterbound}
For any positive integer $p$, there exists a knot $K$ with $\tau(K) \geq 0$ and
\[ \tau(K)+p \leq \nu^+(K) = g_4(K). \]
\end{Thm}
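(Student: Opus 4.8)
The plan is to realize the gap by connected sums of a single small building block, pinning down $\nu^+$ and $g_4$ from both sides so that they coincide. First I would reduce Theorem~\ref{betterbound} to producing, for each $p$, a knot with $\tau=0$ and $\nu^+=g_4=p$: once $\tau(K)=0$, the desired chain $\tau(K)+p\le\nu^+(K)=g_4(K)$ becomes $p\le\nu^+(K)=g_4(K)$, and $\tau(K)\ge 0$ is automatic. I would obtain such a $K$ as $K=\#^p K_0$ for a fixed building block $K_0$ with $\tau(K_0)=0$ and $\nu^+(K_0)=g_4(K_0)=1$. Additivity of $\tau$ under connected sum gives $\tau(K)=p\,\tau(K_0)=0$, and subadditivity of the $4$-ball genus gives $g_4(K)\le p\,g_4(K_0)=p$; combined with the inequality $\nu^+\le g_4$ this yields the upper bound $\nu^+(K)\le g_4(K)\le p$.

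For the building block I would specify $K_0$ through its complex $\CFKi(K_0)$, taking it to be the direct sum of a single $\bbF[U,U^{-1}]$-tower (an unknot summand placing the surviving generator in Alexander grading $0$, which forces $\tau(K_0)=0$) together with an acyclic \emph{box} summand positioned so that $V_0(K_0)=1$ while $V_1(K_0)=0$; here $V_s$ denotes Rasmussen's local $h$-invariants, so that $\nu^+(K_0)=\min\{s\ge 0:V_s(K_0)=0\}=1=g_4(K_0)$. A first point to verify is that such a $K_0$ exists and is realized by a genuine knot. A box confined to Alexander gradings $[-1,1]$ is forced to be symmetric and yields $V_0=0$ (this is exactly what happens for the figure-eight knot, which has $\tau=\nu^+=0$ but $g_4=1$), so $K_0$ must have Seifert genus at least $2$ with the box shifted off-center; I would record an explicit such complex and cite its realizability.

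The heart of the argument, and the step I expect to be the main obstacle, is the matching lower bound $\nu^+(K)\ge p$. By the Künneth formula $\CFKi(K)\simeq\CFKi(K_0)^{\otimes p}$, this amounts to showing $V_{p-1}(K)\ge 1$, i.e.\ that the maps $v_s^+\colon A_s^+\to B^+$ fail to hit the bottom of the tower for every $s<p$. The difficulty is that $\nu^+$ is \emph{not} additive: for instance $\nu^+(K_0\#-K_0)=0$ although $\nu^+(K_0)=1$, so the box contributions in the $p$-fold tensor product could a priori cancel rather than accumulate. The plan is therefore to track one explicit cycle through the product complex and verify that its image under each $v_s^+$ with $s<p$ is $U$-divisible to the required order; equivalently, that the distinguished box class survives in $A_s^+(K)$ for all $s<p$. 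Since $\CFKi(K_0)$ is an explicit finite complex, $\CFKi(K_0)^{\otimes p}$ is explicit as well, so this is in principle a direct, if lengthy, computation; the real content lies in choosing the box placement in $K_0$ so that accumulation, not cancellation, provably occurs.

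Finally I would combine the two bounds: $p\le\nu^+(K)\le g_4(K)\le p$ forces $\nu^+(K)=g_4(K)=p$, while $\tau(K)=0$, so $\tau(K)+p=\nu^+(K)=g_4(K)$ and $\tau(K)\ge 0$. As $p$ is arbitrary, the gap $\nu^+-\tau$ is unbounded, which proves the theorem. Should establishing non-collapse for connected sums of a single $K_0$ prove too delicate, a natural fallback is to design the whole family $\{K_p\}$ directly, prescribing $\CFKi(K_p)$ as a staircase (controlling $\tau$) together with an explicit stack of boxes (controlling $\nu^+=g_4$) and computing the $V_s$ straight from the complex, thereby bypassing any appeal to additivity of $\nu^+$.
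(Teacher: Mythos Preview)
Your building block $K_0$ does not exist as described. If $\CFKi(K_0)$ splits as a \emph{direct sum} of an unknot tower and an acyclic box, then both $A_s^+$ and $B^+$ split accordingly, the map $v_s^+$ respects the splitting, and on the unknot summand $v_s^+$ already sends $1$ to $1$ for every $s\ge 0$. Hence $V_s(K_0)=0$ for all $s\ge 0$ and $\nu^+(K_0)=0$, regardless of where you place the box. (The paper uses exactly this fact: ``acyclic summands do not affect $\tau$, $\nu$, and $\nu^+$.'') Your figure-eight example is not an exception caused by symmetry; it is the generic behavior of any unknot-plus-box complex. To get $\tau=0$ with $\nu^+>0$ the tower generator must be genuinely entangled with the rest of the complex, as in the paper's $13$-generator summand $C$ of $\CFKi(T_{2,9}\#-T_{2,3;2,5})$. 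Once $K_0$ has that kind of complex, your proposed ``track one cycle through $\CFKi(K_0)^{\otimes p}$'' step becomes the entire problem, and you have not given any mechanism for it; as you yourself note, $\nu^+$ is not additive, so accumulation is not automatic.

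The paper avoids connected sums altogether and instead amplifies the gap by \emph{cabling}. Starting from a single explicit knot $K=T_{2,5}\#2T_{2,3}\#-T_{2,3;2,5}$ with $\tau(K)=0$ and $\nu^+(K)=g_4(K)=2$, it considers the cables $K_{p,3p-1}$. The reducible surgery $S^3_{pq}(K_{p,q})\cong S^3_{q/p}(K)\#L(p,q)$ combined with the $d$-invariant surgery formula yields
\[
V_i(K_{p,q})=V_i(T_{p,q})+\max\{V_{\lfloor p_1(i)/p\rfloor}(K),\,H_{\lfloor (p_1(i)-q)/p\rfloor}(K)\},
\]
and since $V_0(K),V_1(K)>0$ one checks the max term is positive for all relevant $i$, giving $\nu^+(K_{p,3p-1})\ge \tfrac{p(3p-1)}{2}+1$. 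An explicit slice surface shows $g_4(K_{p,3p-1})\le \tfrac{p(3p-1)}{2}+1$, so equality holds, while Hom's cabling formula gives $\tau(K_{p,3p-1})=\tfrac{3p(p-1)}{2}$; the gap is $p+1$. The key point is that cabling, unlike connected sum, comes with a surgery-theoretic identity that lets you compute the $V_i$ of the cable directly from those of $K$, with no tensor-product bookkeeping and no risk of cancellation.
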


\begin{rem}
The invariant $\nu^+$ is closely related to the sequence of local $h$ invariants of Rasmussen \cite[Section 7]{RasThesis}, which Rasmussen uses to give bounds on the $4$-ball genus; indeed, $\nu^+$ corresponds to the first place in the sequence where a zero appears.
\end{rem}

\noindent In Proposition \ref{prop:sig}, we also show that the gap between $\nu^+$ and the knot signature can be made arbitrarily large.

In the case of alternating knots (or, more generally, quasi-alternating knots), the invariant $\nu^+$ is completely determined by the signature of the knot.
\begin{Thm}\label{thm:QA}
Let $K \subset S^3$ be a quasi-alternating knot.  Then, $$\nu^+(K)=\left\{
\begin{array}{ll}
0&\text{if } \sigma(K)\ge0,\\
-\frac{\sigma(K)}{2}&\text{if }\sigma(K)<0.
\end{array}
\right.$$
\end{Thm}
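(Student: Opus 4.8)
The plan is to reduce everything to the algebraic structure of the knot Floer complex of a thin knot, and then to evaluate $\nu^+$ on a model complex. First I would record that quasi-alternating knots are \emph{Floer thin}: by the results of Ozsv\'ath--Szab\'o for alternating knots and Manolescu--Ozsv\'ath for quasi-alternating knots, $\HFK(K)$ is supported on the single diagonal determined by the signature, so that $\CFKi(K)$ is a thin complex and $\tau(K)=-\sigma(K)/2$. This identity is what converts the signature appearing in the statement into the $\tau$-dependence that $\nu^+$ actually detects, so it is the natural first step.

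Next I would invoke the classification of thin complexes (Petkova): up to filtered chain homotopy equivalence over $\bbF[U,U^{-1}]$, one has
\[ \CFKi(K) \simeq \mathrm{St}(\tau) \oplus B, \]
where $\mathrm{St}(\tau)$ is the unit-step staircase determined by $\tau=\tau(K)$ and $B$ is a direct sum of square (``box'') summands. For $\tau>0$ the staircase $\mathrm{St}(\tau)$ is exactly $\CFKi(T(2,2\tau+1))$, for $\tau<0$ it is the dual staircase, and for $\tau=0$ it is the trivial one-generator complex; this pins down the staircase part completely in terms of $\tau$.

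Since $\nu^+$ is an invariant of the filtered chain homotopy type of $\CFKi(K)$, it now suffices to show $\nu^+(\mathrm{St}(\tau)\oplus B)=\nu^+(\mathrm{St}(\tau))$ and then to compute the right-hand side. For the first point I would use that Rasmussen's local $h$-invariants (equivalently the invariants $V_s$) are read off from the bottom of the tower $\mathcal{T}^+$ inside the large-surgery homology $H_*(A_s^+)$, where $A_s^+=C\{\max(i,j-s)\ge 0\}$, and that $\nu^+=\min\{s : V_s=0\}$. The decomposition splits $A_s^+$ as $A_s^+(\mathrm{St}(\tau))\oplus A_s^+(B)$, and because each box is acyclic over $\bbF[U,U^{-1}]$, the summand $H_*(A_s^+(B))$ is entirely $U$-torsion: it contributes only to $HF^+_{\mathrm{red}}$ and leaves the tower, hence each $V_s$, untouched. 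Thus $\nu^+$ depends only on the staircase. Finally, a direct inspection of $A_s^+(\mathrm{St}(\tau))$ gives $V_s>0$ for $0\le s<\tau$ and $V_\tau=0$ when $\tau>0$, so that $\nu^+=\tau=-\sigma(K)/2$; and $V_0=0$ when $\tau\le 0$, so that $\nu^+=0$. Recalling that $\nu^+\ge 0$ always, these two cases assemble into the stated formula $\nu^+(K)=\max(0,-\sigma(K)/2)$.

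The step I expect to be the main obstacle is verifying that the box summands make no contribution to $\nu^+$. This is where the $U$-module structure must be tracked carefully through the large-surgery formula: one must confirm that acyclicity of a box over $\bbF[U,U^{-1}]$ really does force its truncated homology $H_*(A_s^+(B))$ into the reduced part and away from the bottom of the $d$-invariant tower, for every $s$, so that no box can shift the first index at which $V_s$ vanishes.
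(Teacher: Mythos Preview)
Your argument is correct, but it follows a different route from the paper's proof. The paper does not invoke Petkova's structure theorem; instead it argues directly from known computations of the large-surgery homology for (quasi-)alternating knots. For $\sigma(K)\ge 0$ it cites the fact that $d(S^3_1(K))=0$, which via the surgery formula forces $V_0=0$ and hence $\nu^+(K)=0$. For $\sigma(K)<0$ it uses the statement (from Ozsv\'ath--Szab\'o, extended to quasi-alternating by Manolescu--Ozsv\'ath) that for $s>0$ one has $H_{\le s+\sigma/2-2}(A_s^+)\cong HF^+_{\le s+\sigma/2-2}(S^3)$; specializing to $s=-\sigma/2$ gives $H_{\le -2}(A_s^+)=0$, so the tower generator (which sits in grading $-2V_s$) must have $V_s=0$, whence $\nu^+\le -\sigma/2$. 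The reverse inequality comes from $\nu^+\ge\tau=-\sigma/2$.

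Your approach trades these literature citations for a structural decomposition $\CFKi(K)\simeq \mathrm{St}(\tau)\oplus B$. This is arguably more transparent: it explains \emph{why} the answer depends only on $\tau$ (the boxes are acyclic and contribute only to $HF^+_{\mathrm{red}}$), and it reduces the computation of $\nu^+$ to a single explicit staircase. The paper's argument is shorter on the page because the hard work is pushed into the cited results on $A_s^+$ for alternating knots, whereas your version requires verifying the ``boxes do not shift $V_s$'' step and the staircase computation by hand---but these are both routine once stated, and your identification of the first as the only nontrivial point is accurate.
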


We also have the following result when $K$ is strongly quasipositive. See \cite{Heddenpositivity} for background on strongly quasipositive knots.

\begin{Prop}
If $K$ is strongly quasipositive, then
\[ \nu^+(K) = \tau(K) = g_4(K) = g(K). \]
\end{Prop}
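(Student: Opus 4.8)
The plan is to exploit the squeeze furnished by the chain of inequalities
\[ \tau(K) \leq \nu^+(K) \leq g_4(K) \leq g(K). \]
The first two inequalities are exactly the content of (1.1), established earlier, and the last is the standard observation that a Seifert surface realizing $g(K)$ may be pushed into the four-ball, so that $g_4(K)\le g(K)$ for every knot. Granting this, it suffices to prove the single equality $\tau(K)=g(K)$: once that is in place, the common value is pinched between $\tau(K)$ and $g(K)$, forcing all four quantities to coincide, and the desired $\nu^+(K)=\tau(K)=g_4(K)=g(K)$ follows at once. Thus the entire content of the statement is concentrated in the equality $\tau(K)=g(K)$, which is precisely where the strong quasipositivity hypothesis must enter.

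To prove $\tau(K)=g(K)$, the approach I would take is via the transverse slice--Bennequin inequality for $\tau$, namely $\mathrm{sl}(\mathcal{K})\le 2\tau(K)-1$ for any transverse representative $\mathcal{K}$ of $K$. By definition $K$ is the closure of a strongly quasipositive braid $\beta$, a product of $k$ positive band generators on $n$ strands; the associated quasipositive Bennequin surface $F$ is a Seifert surface with $\chi(F)=n-k$, and the canonical transverse representative $\mathcal{K}$ determined by $\beta$ has self-linking number $\mathrm{sl}(\mathcal{K})=k-n=-\chi(F)=2g(F)-1$, where $g(F)$ is the genus of $F$. Feeding this into the slice--Bennequin bound yields $g(F)\le\tau(K)$. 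Combining with the elementary inequalities $\tau(K)\le g_4(K)\le g(K)\le g(F)$ (the last because $g(K)$ is the minimal Seifert genus) gives
\[ g(F)\le \tau(K)\le g(K)\le g(F), \]
so that every inequality is an equality; in particular $\tau(K)=g(K)$, and as a byproduct the Bennequin surface $F$ is seen to be genus-minimizing. Alternatively, and more economically, I could simply invoke the theorem of Hedden \cite{Heddenpositivity} that $\tau(K)=g(K)$ for strongly quasipositive knots, which packages exactly this computation.

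I expect the only genuine obstacle to be the self-linking computation $\mathrm{sl}(\mathcal{K})=2g(F)-1$, i.e.\ the identification of the self-linking number of the natural transverse representative with minus the Euler characteristic of the Bennequin surface. This is the step that uses strong quasipositivity in an essential way---each band generator contributes $+1$ to the exponent sum, so the writhe of $\beta$ equals the number of bands---and it rests on Bennequin-type surface theory rather than on Heegaard Floer input. Everything else is a purely formal manipulation of inequalities already at our disposal, together with the transverse slice--Bennequin bound for $\tau$.
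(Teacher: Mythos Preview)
Your proposal is correct and follows essentially the same route as the paper: both arguments use the squeeze $\tau(K)\le\nu^+(K)\le g_4(K)$ from (1.1) together with the equality $\tau(K)=g_4(K)=g(K)$ for strongly quasipositive knots, the latter being Hedden's theorem \cite[Theorem~1.2]{Heddenpositivity}. The only difference is one of presentation: the paper simply cites Hedden, whereas you additionally sketch the slice--Bennequin argument (the self-linking computation $\mathrm{sl}(\mathcal{K})=-\chi(F)$ for the transverse closure of a strongly quasipositive braid) that underlies Hedden's result.
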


\begin{proof}
\cite[Theorem 1.2]{Heddenpositivity} states that $\tau(K)=g_4(K)=g(K)$ if and only if $K$ is strongly quasipositive. Since $\tau(K) \leq \nu^+(K) \leq g_4(K)$, the result follows.
\end{proof}

\vspace{5pt}\noindent{\bf Organization.} In Section \ref{sec:nuplus}, we define the invariant $\nu^+$ and prove various properties. In Section \ref{sec:4ball}, we construct an infinite family of knots in order to prove Theorem \ref{betterbound}. Throughout, we work over $\bbF=\Z/2\Z$.

\vspace{5pt}\noindent{\bf Acknowledgements.} The first author was partially supported by NSF grant DMS-1307879. The second author would like to thank Hiroshi Goda for helpful email communications.

\section{The invariant $\nu^+$}\label{sec:nuplus}

Heegaard Floer homology, introduced by
Ozsv\'ath and Szab\'o \cite{OS3manifolds1}, is an invariant  for closed oriented Spin$^c$ $3$--manifolds $(Y,\mathfrak s)$,
  taking the form of a collection of related homology groups:  $\widehat{HF}(Y,\mathfrak s)$, $HF^{\pm}(Y,\mathfrak s)$, and $HF^\infty(Y,\mathfrak s)$.
There is a $U$--action on the Heegaard Floer homology groups $HF^{\pm}$ and $HF^\infty$.
When $\mathfrak s$ is torsion, there is an absolute Maslov $\mathbb Q$--grading on the Heegaard Floer homology groups. The $U$--action decreases the grading by $2$.

For a rational homology $3$--sphere $Y$ with a Spin$^c$ structure $\mathfrak s$, $HF^+(Y,\mathfrak s)$ can be decomposed as the direct sum of two groups: the first group is the image of $HF^\infty(Y,\mathfrak s)\cong\mathbb \bbF[U,U^{-1}]$ in $HF^+(Y,\mathfrak s)$,
which is isomorphic to $\mathcal T^+=\bbF[U,U^{-1}]/U\bbF[U]$, and its minimal absolute  $\mathbb{Q}$--grading is an invariant of $(Y,\mathfrak s)$, denoted by $d(Y,\mathfrak s)$, the {\it correction term} \cite{OSabsgr}; the second group is the quotient modulo the above image and is denoted by $HF_{\mathrm{red}}(Y,\mathfrak s)$.  Altogether, we have $$HF^+(Y,\mathfrak s)=\mathcal{T}^+\oplus HF_{\mathrm{red}}(Y,\mathfrak s).$$

We briefly recall the large $N$ surgery formula of \cite[Theorem 4.4]{OSknots}. We use the notation of \cite{NiWu}. Let $\CFKi(K)$ denote the knot Floer complex of $K$, which takes the form of a $\Z \oplus \Z$-filtered, $\Z$-graded chain complex over $\bbF[U, U^{-1}]$. The $U$-action lowers each filtration by one. We will be particularly interested in the quotient complexes
\[ A^+_k = C\{ \max \{i, j-k\}  \geq 0\}  \quad \textup{ and } \quad B^+ = C\{ i \geq 0\} \]
where $i$ and $j$ refer to the two filtrations. The complex $B^+$ is isomorphic to $CF^+(S^3)$. There is a map
\[ v^+_k: A^+_k \rightarrow B^+ \]
defined by projection. One can also define a map
\[ h^+_k: A^+_k \rightarrow B^+ \]
defined by projection to $C\{j \geq k\}$, followed by shifting to $C\{j \geq 0\}$ via the $U$-action, and concluding with a chain homotopy equivalence between $C\{j \geq 0\}$ and $C\{i \geq 0\}$. These maps correspond to the maps induced on $HF^+$ by the two handle cobordism from $S^3_N(K)$ to $S^3$ \cite[Theorem 4.4]{OSknots}.

Similarly, one can consider the subquotient complexes
\[ \widehat{A}_k = C\{ \max \{i, j-k\} =0 \} \quad \textup{ and } \widehat{B} = C\{ i = 0\} \cong \widehat{CF}(S^3) \]
and the maps
\[ \widehat{v}_k: \widehat{A}_k \rightarrow \widehat{B} \quad \textup{ and } \quad  \widehat{h}_k: \widehat{A}_k \rightarrow \widehat{B}. \]

The invariant $\tau$ is defined in \cite{OSz4Genus} to be
\[ \tau(K) = \min \{ k \in \Z \mid \iota_k \textup{ induces a nontrivial map on homology}\}, \]
where $\iota_k : C\{ i=0, j \leq k\} \rightarrow \widehat{CF}(S^3)$ denotes inclusion.
A slightly stronger concordance invariant, $\nu$, is defined in \cite[Definition 9.1]{OSrational} to be
$$\nu(K)=\mathrm{min} \{k\in \Z \,|\, \widehat{v}_k: \widehat{A}_k \rightarrow \widehat{CF}(S^3) \;\; \text{induces a nontrivial map in homology} \}.$$
The invariant $\nu(K)$ gives a lower bound for $g_4(K)$ and is equal to either $\tau(K)$ or $\tau(K)+1$; in particular, in many cases $\nu$ gives a better $4$-ball genus than $\tau$.

We can further refine these bounds by considering maps on $CF^+$ rather than $\widehat{CF}$.

\begin{defn}
Define $\nu^+(K)$ by
$$\nu^+(K)=\mathrm{min} \{k\in \Z \,|\, v^+_k: A^+_k \rightarrow CF^+(S^3), \;\; v^+_k(1)=1 \}.$$
Here, $1$ denotes the lowest graded generator of the subgroup $\mathcal T^+$ in the homology of the complex.
\end{defn}

According to \cite{NiWu}, the definition of $\nu^+(K)$ is equivalent to the smallest $k$ such that $V_k=0$, where $V_k$ is the $U$-exponent of $v^+_k$ at sufficiently high gradings. We can define $H_k$ similarly in terms of $h^+_k$. By \cite[Equation (13)]{NiWurational} and \cite[Lemma 2.5]{HLZ}, the $V_k$'s and $H_k$'s satisfy
\begin{align}
	H_k &= V_{-k} \label{eqn:HV} \\
	H_k &= V_k + k \label{eqn:VHk} \\
	V_k -1 &\leq V_{k+1} \leq V_k \label{eqn:monotonicity}
\end{align}
and are related to the correction terms in the surgery formula \cite[Proposition 1.6]{NiWu}:

\begin{prop}\label{prop:Corr}
Suppose $p,q>0$, and fix $0\le i\le p-1$. Then
\begin{equation}\label{Corr}
d(S^3_{p/q}(K),i)=d(L(p,q),i)-2\max\{V_{\lfloor\frac{i}q\rfloor},H_{\lfloor\frac{i-p}q\rfloor}\}.
\end{equation}
\end{prop}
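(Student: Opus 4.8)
The plan is to identify $HF^+(S^3_{p/q}(K),i)$ with the homology of the rational surgery mapping cone of Ozsv\'ath--Szab\'o \cite{OSrational} and then to locate the bottom of its surviving $\mathcal{T}^+$ tower. For the \Spinc structure labelled by $i$ one forms
\[ \X^+_i = \on{Cone}\left( \A^+_i \xrightarrow{D^+_i} \B^+_i \right), \]
where $\A^+_i = \bigoplus_{s \in \Z} (s, A^+_{\lfloor (i+ps)/q \rfloor})$, $\B^+_i = \bigoplus_{s \in \Z} (s, B^+)$, and $D^+_i$ sends $(s,a) \mapsto (s, v^+_{\lfloor (i+ps)/q\rfloor}(a)) + (s+1, h^+_{\lfloor (i+ps)/q\rfloor}(a))$. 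Since $p>0$, the surgered manifold is a rational homology sphere, so $H_*(\X^+_i)$, which computes $HF^+(S^3_{p/q}(K),i)$ up to an overall grading shift, contains exactly one tower $\mathcal{T}^+$, and the claim is that its bottom grading equals $d(L(p,q),i) - 2\max\{V_{\lfloor i/q\rfloor}, H_{\lfloor (i-p)/q\rfloor}\}$.

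First I would pin down the baseline $d(L(p,q),i)$ by specializing to the unknot. For $K = \on{unknot}$ every $A^+_s \cong \mathcal{T}^+$, each $v^+_s$ and $h^+_s$ is a power of $U$ with exponent $V_s = H_s = 0$ in the relevant range, and $S^3_{p/q}(\on{unknot}) = L(p,q)$; running the argument below in this case fixes the grading-shift conventions of the mapping cone and identifies the bottom of the tower of $H_*(\X^+_i)$ with $d(L(p,q),i)$. For a general knot the complexes $A^+_s$ still agree with $\mathcal{T}^+$ at sufficiently high gradings, and on that tower $v^+_s$ acts as $U^{V_s}$ while $h^+_s$ acts as $U^{H_s}$. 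Thus, at the gradings that matter for the correction term, $\X^+_i$ differs from the unknot cone only through these $U$-powers, which is the source of the correction $-2\max\{V,H\}$.

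Next I would truncate. By \eqref{eqn:monotonicity} the exponents $V_s$ are non-increasing, and by \eqref{eqn:HV} the $H_s$ are non-decreasing, in $s$; hence for $s \gg 0$ we have $V_{\lfloor (i+ps)/q\rfloor}=0$, so $v^+$ is an isomorphism onto the tower, and for $s \ll 0$ we have $H_{\lfloor (i+ps)/q\rfloor}=0$, so $h^+$ is an isomorphism onto the tower. Cancelling these acyclic tails reduces $\X^+_i$ to a finite zig-zag. The distinguished summand $(0,B^+)$ receives $v^+$ from $(0, A^+_{\lfloor i/q\rfloor})$ and $h^+$ from $(-1, A^+_{\lfloor (i-p)/q\rfloor})$ --- exactly the two indices appearing in the statement. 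Tracing the surviving tower through the truncated cone, its bottom generator is represented in $(0,B^+)$, and the obstruction to lifting $1 \in \mathcal{T}^+ \subset (0,B^+)$ to a cycle of the cone that survives to homology is controlled by how far down the two incoming maps reach; the grading computation should then show that it is the larger of $V_{\lfloor i/q\rfloor}$ and $H_{\lfloor (i-p)/q\rfloor}$ that determines the level of the bottom, producing the factor $2$ because $U$ lowers the Maslov grading by $2$.

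The main obstacle is the grading bookkeeping. One must track the internal grading shifts between the various $(s,A^+)$ and $(s,B^+)$ summands, verify that after truncation precisely one tower survives with no spurious contribution from $HF_{\mathrm{red}}$ at the relevant grading, and---most delicately---confirm that it is genuinely the $\max$, rather than the sum or the minimum, of $V_{\lfloor i/q\rfloor}$ and $H_{\lfloor (i-p)/q\rfloor}$ that governs the bottom of the tower. This last point is where the relations \eqref{eqn:HV}--\eqref{eqn:monotonicity} do the real work: the monotonicity of the $V_s$ and $H_s$ guarantees that neighbouring summands in the zig-zag cannot push the surviving tower below the level dictated by these two distinguished maps, so that the extremal contribution is the one recorded by the $\max$.
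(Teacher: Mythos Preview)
The paper does not prove this proposition; it is quoted from \cite[Proposition~1.6]{NiWu} and used as a black box. Your outline is essentially a sketch of the Ni--Wu argument itself: set up the rational-surgery mapping cone, compare with the unknot to normalise the absolute grading, truncate using the monotonicity of $V_s$ and $H_s$, and read off the shift from the two distinguished maps landing in $(0,B^+)$.

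One conceptual correction is worth making. For $p/q>0$ the restriction of $D^+_i$ to the towers is \emph{surjective}, so every tower element of $\B^+_i$ is a boundary in $\X^+_i$; the surviving $\mathcal{T}^+$ therefore sits inside $\ker(D^+_i)\subset\A^+_i$, not in $(0,B^+)$ as your sketch suggests. What is true---and what makes your heuristic work---is that among the summands $(s,B^+)$ the one at $s=0$ carries the largest grading shift when $0\le i\le p-1$, so the relation $U^{V_{\lfloor i/q\rfloor}}a_0=U^{H_{\lfloor(i-p)/q\rfloor}}a_{-1}$ imposed there is the binding constraint on the grading of a kernel element $(a_s)_s$; the monotonicity in \eqref{eqn:HV}--\eqref{eqn:monotonicity} then guarantees that the constraints at all other $s$ are no worse. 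This is precisely why the $\max$ of exactly those two exponents appears. As you yourself note, converting this into an honest proof requires the full grading bookkeeping, which is what \cite{NiWu} supplies.
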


We have the following properties for $\nu^+$.

\begin{prop}
The invariant $\nu^+$ satisfies:
\begin{enumerate}
\item \label{it:invt}
$\nu^+$ is a smooth concordance invariant.
\item \label{it:V0}
$\nu^+(K) \geq 0$, and the equality holds if and only if $V_0=0$.
\item \label{it:tau}
$\nu^+(K) \geq \nu(K) \geq \tau(K)$.
\end{enumerate}
\end{prop}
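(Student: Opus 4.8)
The plan is to handle the three items essentially independently, doing the two algebraic statements \eqref{it:V0} and \eqref{it:tau} first and reserving the concordance invariance \eqref{it:invt} for last, since it is the only one requiring genuinely topological input. For \eqref{it:V0}, I would first note that $\nu^+$ is well defined: for $k$ sufficiently large the filtration condition forces $A^+_k$ to agree with $B^+$ with $v^+_k$ the identity, so $V_k=0$, and $V_k\ge 0$ always since it is a $U$-exponent. By the monotonicity \eqref{eqn:monotonicity} the sequence $(V_k)$ is non-increasing, hence once it reaches $0$ it stays $0$. To pin down the sign, eliminate $H_k$ between \eqref{eqn:HV} and \eqref{eqn:VHk} to get $V_{-k}=V_k+k$; for any $m>0$ this gives $V_{-m}=V_m+m\ge m>0$, so $V_k>0$ for every $k<0$. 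Therefore $\nu^+(K)=\min\{k:V_k=0\}\ge 0$, and since $V_k>0$ for $k<0$, equality $\nu^+(K)=0$ holds precisely when $V_0=0$.

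For \eqref{it:tau}, the inequality $\nu(K)\ge\tau(K)$ is already recorded in \cite{OSrational} (indeed $\nu\in\{\tau,\tau+1\}$), so it remains to prove $\nu^+(K)\ge\nu(K)$. I would use the short exact sequences induced by the $U$-action,
$$0 \to \widehat{A}_k \to A^+_k \xrightarrow{U} A^+_k \to 0, \qquad 0 \to \widehat{B} \to B^+ \xrightarrow{U} B^+ \to 0,$$
which are compatible with $\widehat{v}_k$ and $v^+_k$. On homology the bottom generator $1$ of $\mathcal T^+\subset HF^+(S^3)$ satisfies $U\cdot 1=0$, so it is the image of the generator of $\widehat{HF}(S^3)$ under the injective inclusion $\iota_B$. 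If $V_k=0$ then $v^+_k$ carries the bottom of the tower in $H_*(A^+_k)$, which itself lifts to a class $a\in H_*(\widehat A_k)$ via $\iota_A$, to $1\in\mathcal T^+$; commutativity of the diagram gives $\iota_B(\widehat v_k(a))=1=\iota_B(\text{generator})$, and injectivity of $\iota_B$ forces $\widehat v_k(a)\ne 0$. Hence $\{k:V_k=0\}\subseteq\{k:\widehat v_k\ne 0\}$, and taking minima yields $\nu(K)\le\nu^+(K)$.

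For \eqref{it:invt}, I would reduce concordance invariance of $\nu^+$ to that of the individual $V_k$ via Proposition \ref{prop:Corr}. If $K_0$ and $K_1$ are smoothly concordant, then for each $p/q$ the manifolds $S^3_{p/q}(K_0)$ and $S^3_{p/q}(K_1)$ are rational homology cobordant through the trace of the concordance, with corresponding $\Spinc$ labels, so $d(S^3_{p/q}(K_0),i)=d(S^3_{p/q}(K_1),i)$ for all $i$. Substituting into \eqref{Corr} and cancelling the common lens-space term $d(L(p,q),i)$ shows $\max\{V_{\lfloor i/q\rfloor},H_{\lfloor (i-p)/q\rfloor}\}$ agrees for the two knots. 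To isolate a single $V_k$ (with $k\ge 0$, which is all that is needed since $\nu^+\ge 0$), choose $q$, then $i=kq$ and $p=Nq$ with $N$ large, so that $\lfloor i/q\rfloor=k$ while $\lfloor (i-p)/q\rfloor=k-N$ is so negative that $H_{k-N}=V_{N-k}=0$; the maximum then collapses to $V_k$. This gives $V_k(K_0)=V_k(K_1)$ for every $k$, whence $\nu^+(K_0)=\nu^+(K_1)$.

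The main obstacle is the topological input in \eqref{it:invt}: verifying that surgery along a concordance yields a rational homology cobordism under which the $\Spinc$ structures match the indexing $i$ appearing in \eqref{Corr}, and that the correction term $d$ is invariant under such a cobordism. Once this is in place the rest of \eqref{it:invt} is a short computation, and \eqref{it:V0}, \eqref{it:tau} are formal consequences of the relations \eqref{eqn:HV}--\eqref{eqn:monotonicity} and the $U$-action exact sequences.
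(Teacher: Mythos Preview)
Your proof is correct and follows essentially the same approach as the paper: the paper's proof is extremely terse (one sentence per item), and yours is a faithful elaboration of the same ideas---using \eqref{eqn:HV}--\eqref{eqn:monotonicity} for \eqref{it:V0}, chasing the commutative square of $\widehat v_k$ and $v^+_k$ for \eqref{it:tau}, and reducing \eqref{it:invt} to the concordance invariance of the $V_k$ via the surgery formula \eqref{Corr} and the $d$-invariants. The only cosmetic difference is that for \eqref{it:V0} the paper checks just $V_{-1}>0$ (which suffices by monotonicity) whereas you check all $V_{-m}>0$ directly, and for \eqref{it:invt} the paper simply cites \cite[Proposition 2.11]{NiWu} rather than spelling out the isolation of $V_k$ from the surgery formula as you do.
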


\begin{proof}
To see \ref{it:invt}, note that $V$'s are determined by the $d$-invariants of the surgered manifolds $S^3_n(K)$ \cite[Proposition 2.11]{NiWu}, and the $d$-invariants are concordance invariants.  To see \ref{it:V0}, note that $V_{-1}>H_{-1}=V_{1}\geq 0$ by Equations (\ref{eqn:HV}) and (\ref{eqn:VHk}).  To see \ref{it:tau}, chase the commutative diagram
$$
\begin{CD}
\widehat A_k&@>j_A>>&A^+_k\\
@V\widehat v_kVV&&@Vv^+_kVV\\
\widehat B&@>j_B>>&B^+.
\end{CD}
$$
\end{proof}

The $\nu^+$ invariant can be computed explicitly for quasi-alternating knots, a generalization of alternating knots introduced in \cite{ManolescuOzsvath}. In fact, Theorem \ref{thm:QA} states that $\nu^+$ is completely determined by the signature of the knot, just as the $\tau$ invariant:
$$\nu^+(K)=\left\{
\begin{array}{ll}
0&\text{if } \sigma(K)\ge0,\\
-\frac{\sigma(K)}{2}&\text{if }\sigma(K)<0.
\end{array}
\right.$$

\begin{proof}[Proof of Theorem \ref{thm:QA}]
Let $K$ be quasi-alternating. By \cite[Corollary 1.5]{OSzAlt} and \cite[Theorem 2]{ManolescuOzsvath}, $d(S^3_1(K))=0$ when $\sigma(K)\ge 0$.  This proves that $\nu^+(K)=0$ when $\sigma(K)\ge 0$.  On the other hand, the proof of Theorem 1.4 of \cite{OSzAlt}, together with \cite[Theorem 2]{ManolescuOzsvath}, implies that for any $s>0$,
$$H_{\le s+\frac{\sigma}{2}-2}(A_s^+)\cong HF^+_{\le s+\frac{\sigma}{2}-2}(S^3).$$
In particular, if we let $s=-\sigma/2$ when $\sigma(K)<0$, then $$H_{\leq-2}(A_s^+)\cong HF^+_{\leq-2}(S^3)\cong 0.$$
Here, the gradings of the homology of both sides are inherited from the grading on $CFK^\infty(K)$. Thus, the generator of $\mathcal{T}^+ \subset H_*(A^+_s)$ has grading $-2V_s$. In light of the vanishing of the homology group $H_{\leq-2}(A_s^+)$, we must have $V_s=0$. So
$$\nu^+(K)\leq s= -\sigma(K)/2$$ from the definition.  We also know that $$\nu^+(K)\geq \tau(K)=-\sigma(K)/2$$ for a quasi-alternating knot $K$.  Hence, $\nu^+(K)=-\sigma(K)/2$.
\end{proof}

Next, we show that $\nu^+$ also give a lower bound for the four-ball genus of a knot.

\begin{prop}
$\nu^+(K) \leq g_4(K)$
\end{prop}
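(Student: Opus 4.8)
The plan is to reduce everything to the single statement that $V_g=0$, where $g=g_4(K)$. Indeed, by the characterization of $\nu^+(K)$ as the smallest $k$ with $V_k=0$, together with the nonnegativity of the $V_k$ and the monotonicity \eqref{eqn:monotonicity}, once we know $V_g=0$ we immediately get $\nu^+(K)\le g=g_4(K)$. Equivalently, by the Definition of $\nu^+$, this amounts to showing that $v^+_g$ carries the bottom generator $1$ of $\mathcal{T}^+$ to the generator $1$ of $\mathcal{T}^+\subset CF^+(S^3)$. So the entire problem becomes a four-ball genus bound on one algebraic quantity.

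To reach $V_g$ geometrically, I would pass through the correction terms of large surgeries, which is exactly where Proposition \ref{prop:Corr} applies. Let $F\subset B^4$ be a smoothly embedded surface of genus $g$ with $\partial F=K$, and fix a large integer $N$. Attaching a two-handle to $B^4$ along $K$ with framing $N$ produces a four-manifold $W$ with $\partial W=S^3_N(K)$; capping $F$ with the core of the two-handle gives a closed surface $\widehat{F}\subset W$ of genus $g$ with $[\widehat{F}]^2=N$ generating $H_2(W;\Z)\cong\Z$. For $N$ sufficiently large, the surgery formula identifies $H_*(A^+_i)$ with $HF^+(S^3_N(K),i)$, and Equation \eqref{Corr} (taking $q=1$ and $0\le i\le N/2$, so that the maximum is realized by $V_i$ via \eqref{eqn:HV} and \eqref{eqn:monotonicity}) reduces to $d(S^3_N(K),i)=d(L(N,1),i)-2V_i$. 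Hence an upper bound on $V_i$ is precisely a lower bound on the correction term $d(S^3_N(K),i)$.

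The crux is then a four-dimensional adjunction argument for correction terms, carried out exactly as in Ozsv\'ath--Szab\'o's proof that $\tau(K)\le g_4(K)$ in \cite{OSz4Genus}, but now at the level of $HF^+$ rather than $\widehat{CF}$. The embedded genus-$g$ surface $\widehat{F}$, of nonnegative self-intersection $N$, constrains the cobordism map $B^4\cup W$ induces on $HF^+$: the grading shift governed by $c_1(\mathfrak{s})^2$ together with the adjunction relation forces the generator of $\mathcal{T}^+\subset H_*(A^+_i)$ to survive for all $i\ge g$, so that $d(S^3_N(K),i)$ attains its maximal lens-space value $d(L(N,1),i)$ for such $i$. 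Comparing with the displayed formula yields $V_i=0$ for $i\ge g$, and in particular $V_g=0$, completing the argument. I expect this adjunction step to be the main obstacle: one must track the $\Spinc$-structure labels and the grading shift across the two-handle cobordism with the correct orientation conventions, so that $\widehat{F}$ genuinely constrains the correction terms indexed by $i\ge g$ rather than some shifted range.

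Finally, I would note the shortcut anticipated by the Remark following Theorem \ref{betterbound}: the inequality $V_i(K)=0$ for $i\ge g_4(K)$ is exactly Rasmussen's genus bound for the local $h$-invariants \cite[Section 7]{RasThesis} under the identification of the sequence $\{V_k\}$ with Rasmussen's sequence. Since $\nu^+$ is the first index at which this sequence vanishes, $\nu^+(K)\le g_4(K)$ follows at once. In practice I would present the surgery argument above and defer the adjunction inequality to these cited four-manifold results rather than re-deriving it from scratch.
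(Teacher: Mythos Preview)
Your proposal is correct, and the paper's proof is precisely the shortcut you describe in your final paragraph: it simply cites \cite[Corollary 7.4]{RasThesis} (the bound $g_4(K)\ge V_k+k$ for $k\le g_4(K)$) after identifying Rasmussen's $h_k$ with $V_k$, which at $k=g_4(K)$ forces $V_{g_4(K)}=0$ and hence $\nu^+(K)\le g_4(K)$. Your longer surgery-and-adjunction sketch is essentially an unpacking of the geometry behind Rasmussen's cited result rather than a genuinely different route.
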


\begin{proof}
This follows from \cite[Corollary 7.4]{RasThesis}.  The function $h_k(K)$ in \cite{RasThesis} is the same as $V_k$ in \cite{NiWu}.
\end{proof}

\begin{rem}
\cite[Corollary 7.4]{RasThesis} states that $g_4(K) \geq V_k+k$ for all $k\leq g_4(K)$, so one might wonder if other $V_k$'s can give stronger $4$-ball genus bounds. However, since $V_k-1  \leq V_{k+1}  \leq V_k$, it follows that $\nu^+$ is the best $4$-ball genus bound obtainable from the sequence of $V_k$'s.
\end{rem}

\section{Four-ball genus bound}\label{sec:4ball}

In this section, we exhibit some examples of knots whose $\nu^+$ invariant is arbitrarily better than the corresponding $\tau$ invariant.  Hence, the $\nu^+$ invariant indeed gives us significantly improved four-ball genus bound for some particular knots.  We will show that for any integer $n \geq 2$, there exists a knot $K$ with $\tau(K) \geq 0$ and
\[ \tau(K)+n = \nu^+(K) = g_4(K).\]

Let $K_{p, q}$ denote the $(p, q)$-cable of $K$, where $p$ denotes the longitudinal winding. Without loss of generality, we will assume throughout that $p>0$. Let $T_{p, q}$ denote the $(p, q)$-torus knot (that is, the $(p, q)$-cable of the unknot), and $T_{p,q; m, n}$ the $(m, n)$-cable of $T_{p, q}$. We begin with a single example of a knot for which $\nu^+$ gives a better $4$-ball genus bound than $\tau$.

\begin{prop}
Let $K$ be the knot  $T_{2,9} \# -T_{2,3; 2,5}$. We have
\[ \tau(K) = 0, \qquad \nu(K) = 1, \quad \textup{ and } \quad \nu^+(K) = 2. \]
\end{prop}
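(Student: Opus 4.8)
The plan is to compute the three invariants $\tau(K)$, $\nu(K)$, and $\nu^+(K)$ for $K = T_{2,9} \# -T_{2,3;2,5}$ by reducing to the study of the knot Floer complexes of the two summands and their behavior under connected sum. First I would recall that knot Floer homology turns connected sums into tensor products: $\CFKi(K_1 \# K_2) \simeq \CFKi(K_1) \otimes_{\bbF[U,U^{-1}]} \CFKi(K_2)$, filtered and graded appropriately. Since $K$ is a connected sum with a mirror, I would use that the complex of $-T_{2,3;2,5}$ is the dual of the complex of $T_{2,3;2,5}$. Thus the whole computation rests on knowing $\CFKi(T_{2,9})$ and $\CFKi(T_{2,3;2,5})$ explicitly. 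The first is a standard torus-knot staircase complex; the second, being a cable, can in principle be obtained from the cabling formulas for knot Floer homology (e.g.\ via the work of Hedden and Hom on cables), or read off directly if its small model is already recorded in the literature.

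Once the model for $\CFKi(K)$ is in hand, the three invariants are extracted by their definitions. For $\tau(K)$, I would use additivity of $\tau$ under connected sum together with its behavior under mirroring, reducing to $\tau(T_{2,9}) = 4$ and $\tau(T_{2,3;2,5})$; the claim $\tau(K) = 0$ then amounts to verifying these values cancel. For $\nu$ and $\nu^+$, which are \emph{not} additive, there is no shortcut: I would directly analyze the maps $\widehat v_k$ and $v^+_k$ on the tensor-product complex. Concretely, $\nu(K)$ is the smallest $k$ for which $\widehat v_k \co \widehat A_k \to \widehat{CF}(S^3)$ is nontrivial on homology, and $\nu^+(K)$ is the smallest $k$ with $V_k = 0$, i.e.\ the smallest $k$ for which $v^+_k$ sends the bottom of $\mathcal T^+$ onto the bottom of $\mathcal T^+$. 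Equivalently, via Proposition \ref{prop:Corr}, I can compute the relevant $V_k$'s from $d$-invariants of surgeries on $K$, using that the connected-sum complex determines these; the target values are $V_0 = V_1 = 1$ and $V_2 = 0$, which would give $\nu^+(K) = 2$, while the $\widehat{}$-level computation separates $\nu = 1$ from $\nu^+ = 2$.

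The main obstacle is the explicit determination of $\CFKi(T_{2,3;2,5})$ and, more importantly, the bookkeeping of the tensor product: the complex $\CFKi(K)$ will have many generators, and tracking the full $\Z \oplus \Z$ filtration together with the Maslov grading through the maps $v^+_k$ and $\widehat v_k$ is where the real work lies. The subtlety is that $\nu^+$ detects information invisible to $\nu$ and to $\tau$, so I must be careful to retain the module structure and both filtrations rather than collapsing to a single one; a sloppy reduction would merge $\nu$ and $\nu^+$. In practice I expect to identify a convenient reduced model (a basis in which the differential is as simple as possible, e.g.\ a direct sum of a staircase and a box summand), since the box summands are precisely what can raise $\nu^+$ above $\nu$. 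The final step is simply to read off the three stated values from this reduced model, verifying in particular that the $\nu^+$-jump to $2$ comes from a summand that contributes a nonzero $V_1$ even though the corresponding $\widehat v_1$ is already nontrivial.
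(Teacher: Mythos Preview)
Your plan is essentially the paper's own approach: compute $\CFKi$ of each factor, tensor via K\"unneth, simplify by a change of basis, and read off the three invariants directly from the resulting model. Two refinements are worth noting.

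First, the paper avoids general cabling formulas for $\CFKi(T_{2,3;2,5})$: since $T_{2,3;2,5}$ is an $L$-space knot (Hedden), its complex is the staircase determined by its Alexander polynomial $t^8 - t^7 + t^4 - t + 1$, and the dual gives $\CFKi(-T_{2,3;2,5})$ immediately.

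Second, your expectation for the reduced model and your diagnosis of what produces $\nu^+ > \nu$ are off. After the change of basis the tensor product splits as $C \oplus (\text{eight acyclic boxes})$. The acyclic box summands contribute nothing to $\tau$, $\nu$, or $\nu^+$ (they do not affect the $V_k$'s), so they cannot be what raises $\nu^+$ above $\nu$. All three invariants are read off from the single non-acyclic summand $C$, which is \emph{not} a plain staircase: it carries extra differentials (e.g.\ arrows of the form $z_5 \to z_0$, $z_7 \to z_{12}$) that make $Uz_6$ a nontrivial cycle in $A^+_1$ while $z_6$ becomes the generator only in $A^+_2$. That internal structure of $C$, not any separate box summand, is what gives $V_1 > 0$ and hence $\nu^+ = 2$.
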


\begin{proof}
The torus knot $T_{2,9}$ is an $L$-space knot, as is $T_{2,3; 2,5}$ \cite[Theorem 1.10]{HeddencablingII}, so their knot Floer complexes are completely determined by their Alexander polynomials \cite[Theorem 1.2]{OSlens} (cf. \cite[Remark 6.6]{Homsmooth}). We have that
\[ \Delta_{T_{2,9}}(T) = t^8 -t^7 + t^6 - t^5 + t^4 - t^3 + t^2 -t +1 \]
and
\begin{align*}
	\Delta_{T_{2,3; 2,5}}(t) &= \Delta_{T_{2,3}}(t^2) \cdot \Delta_{T_{2,5}}(t) \\
				&= t^8 -t^7 + t^4 -t +1.
\end{align*}
Furthermore, we have that $\CFKi(-K) \cong \CFKi(K)^*$ \cite[Section 3.5]{OSknots}, where $\CFKi(K)^*$ denotes the dual of $\CFKi(K)$. Thus, $\CFKi(-T_{2,3; 2,5})$ is generated over $\bbF[U, U^{-1}]$ by
\[ [y_0, 0, -4], \quad [y_1, -1, -4], \quad [y_1, -1, -1], \quad [y_3, -3, -1], \quad [y_4, -4, 0], \]
where we write $[y, i, j]$ to denote that the generator $y$ has filtration level $(i, j)$. The differential is given by
\begin{align*}
	\partial y_0 &= y_1 \\
	\partial y_2 &= y_1 + y_3 \\
	\partial y_4 &= y_3.
\end{align*}
The complex $\CFKi(T_{2,9})$ is generated by
\begin{equation*}
  \begin{gathered}
  \ [x_0, 0, 4], \quad [x_1, 1, 4], \quad [x_2, 1, 3], \quad [x_3, 2, 3], \quad [x_4, 2, 2], \\
 \quad [x_5, 3, 2], \quad [x_6, 3, 1], \quad [x_7, 4, 1], \quad [x_8, 4, 0].
  \end{gathered}
\end{equation*}
The differential is given by
\begin{align*}
	\partial x_1 &= x_0 +x_2 \\
	\partial x_3 &= x_2 + x_4 \\
	\partial x_5 &= x_4 + x_6 \\
	\partial x_7 &= x_6 + x_8.	
\end{align*}

The complexes $\CFKi(-T_{2,3; 2,5})$ and $\CFKi(T_{2,9})$ are depicted in Figures \ref{fig:T2325} and \ref{fig:T29}, respectively. (More precisely, $\CFKi$ consists of the complexes pictured tensored with $\bbF[U, U^{-1}]$, where $U$ lowers $i$ and $j$ each by $1$.) In particular, we see that $\tau(-T_{2,3; 2,5})= -4$ since $y_0$ generates the vertical homology, and that $\tau(T_{2,9})=4$ since $x_0$ generates the vertical homology. Since $\tau$ is additive under connected sum, it follows that
\[ \tau(-T_{2,3; 2,5} \# T_{2,9})=0, \]
as desired.

\begin{figure}[htb!]
\vspace{5pt}
\centering
\begin{tikzpicture}[scale=0.8]

	\draw[step=1, black!20!white, very thin] (-4.9, -4.9) grid (0.9, 0.9);
	
	\begin{scope}[thin, black!60!white]
		\draw [<->] (-5, 0) -- (1, 0);
		\draw [<->] (0, -5) -- (0, 1);
	\end{scope}

	\filldraw (-4, 0) circle (2pt) node[] (a){};
	\filldraw (-4, -1) circle (2pt) node[] (b){};
	\filldraw (-1, -1) circle (2pt) node[] (c){};
	\filldraw (-1, -4) circle (2pt) node[] (d){};
	\filldraw (0, -4) circle (2pt) node[] (e){};
	\draw [very thick, ->] (a) -- (b);
	\draw [very thick, ->] (c) -- (b);
	\draw [very thick, ->] (c) -- (d);
	\draw [very thick, ->] (e) -- (d);
	\node [right] at (a) {$y_4$};
	\node [left] at (b) {$y_3$};
	\node [right] at (c) {$y_2$};
	\node [left] at (d) {$y_1$};
	\node [right] at (e) {$y_0$};
\end{tikzpicture}
\caption{$\CFKi(-T_{2,3; 2,5})$}
\label{fig:T2325}
\end{figure}

\begin{figure}[htb!]
\vspace{5pt}
\centering
\begin{tikzpicture}[scale=0.8]

	\draw[step=1, black!20!white, very thin] (-0.9, -0.9) grid (4.9, 4.9);
	
	\begin{scope}[thin, black!60!white]
		\draw [<->] (-1, 0) -- (5, 0);
		\draw [<->] (0, -1) -- (0, 5);
	\end{scope}
	
	\filldraw (0, 4) circle (2pt) node[] (a){};
	\filldraw (1, 4) circle (2pt) node[] (b){};
	\filldraw (1, 3) circle (2pt) node[] (c){};
	\filldraw (2, 3) circle (2pt) node[] (d){};
	\filldraw (2, 2) circle (2pt) node[] (e){};
	\filldraw (3, 2) circle (2pt) node[] (f){};
	\filldraw (3, 1) circle (2pt) node[] (g){};
	\filldraw (4, 1) circle (2pt) node[] (h){};
	\filldraw (4, 0) circle (2pt) node[] (i){};
	\draw [very thick, <-] (a) -- (b);
	\draw [very thick, <-] (c) -- (b);
	\draw [very thick, <-] (c) -- (d);
	\draw [very thick, <-] (e) -- (d);
	\draw [very thick, <-] (e) -- (f);
	\draw [very thick, <-] (g) -- (f);
	\draw [very thick, <-] (g) -- (h);
	\draw [very thick, <-] (i) -- (h);
	\node [left] at (a) {$x_0$};
	\node [right] at (b) {$x_1$};
	\node [left] at (c) {$x_2$};
	\node [right] at (d) {$x_3$};
	\node [left] at (e) {$x_4$};
	\node [right] at (f) {$x_5$};
	\node [left] at (g) {$x_6$};
	\node [right] at (h) {$x_7$};
	\node [left] at (i) {$x_8$};
\end{tikzpicture}
\caption{$\CFKi(T_{2,9})$}
\label{fig:T29}
\end{figure}

The knot Floer complex satisfies a K\"unneth formula \cite[Theorem 7.1]{OSknots}:
	\[ \CFKi(K_1 \# K_2) \cong \CFKi(K_1) \otimes_{\bbF[U, U^{-1}]} \CFKi(K_2). \]
In particular, we may compute $\CFKi(T_{2,9} \# -T_{2,3; 2, 5})$ as the tensor product of $\CFKi(T_{2,9})$ and $\CFKi(-T_{2,3; 2,5})$ , where
\[ [x, i, j] \otimes [y, k, \ell] = [xy, i+k, j+\ell]. \]

The generators, filtration levels, and differentials in the tensor product are listed below.

\begin{align*}
	\partial [x_0y_0, 0, 0] &= x_0y_1 \\			
	\partial [x_1y_0, 1, 0] &= x_1y_1+x_0y_0 +x_2y_0 \\
	\partial [x_2y_0, 1, -1] &= x_2y_1  \\
	\partial [x_3y_0, 2, -1] &= x_3y_1+x_2y_0+x_4y_0 \\
	\partial [x_4y_0, 2, -2] &= x_4y_1 \\
	\partial [x_5y_0, 3, -2] &= x_5y_1+x_4y_0 +x_6y_0	\\
	\partial [x_6y_0, 3, -3] &= x_6y_1 \\
	\partial [x_7y_0, 4, -3] &= x_7y_1+x_6y_0 +x_8y_0	\\
	\partial [x_8y_0, 4, -4] &= x_8y_1 \\
	\partial [x_0y_1, -1, 0] &= 0 \\
	\partial [x_1y_1, 0, 0] &= x_0y_1+x_2y_1 	\\
	\partial [x_2y_1, 0, -1] &= 0 \\
	\partial [x_3y_1, 1, -1] &= x_2y_1+x_4y_1 \\
	\partial [x_4y_1, 1, -2] &= 0 \\
	\partial [x_5y_1, 2, -2] &= x_4y_1+x_6y_1 \\
	\partial [x_6y_1, 2, -3] &= 0 \\
	\partial [x_7y_1, 3, -3] &= x_6y_1+x_8y_1 \\
	\partial [x_8y_1, 3, -4] &= 0 \\
	\partial [x_0y_2, -1, 3] &= x_0y_1+x_0y_3 \\
	\partial [x_1y_2, 0, 3] &= x_1y_1+x_1y_3 +x_0y_2+x_2y_2\\
	\partial [x_2y_2, 0, 2] &= x_2y_1+x_2y_3 \\
	\partial [x_3y_2, 1, 2] &= x_3y_1+x_3y_3 +x_2y_2+x_4y_2\\
	\partial [x_4y_2, 1, 1] &= x_4y_1+x_4y_3 \\
	\partial [x_5y_2, 2, 1] &= x_5y_1+x_5y_3 +x_4y_2+x_6y_2\\
	\partial [x_6y_2, 2, 0] &= x_6y_1+x_6y_3 \\
	\partial [x_7y_2, 3, 0] &= x_7y_1+x_7y_3 +x_6y_2+x_8y_2\\
	\partial [x_8y_2, 3, -1] &= x_8y_1+x_8y_3 \\
	\partial [x_0y_3, -4, 3] &= 0 \\
	\partial [x_1y_3, -3, 3] &= x_0 y_3+x_2y_3 \\
	\partial [x_2y_3, -3, 2] &= 0 \\
	\partial [x_3y_3, -2, 2] &= x_2 y_3+x_4y_3 \\
	\partial [x_4y_3, -2, 1] &= 0 \\
	\partial [x_5y_3, -1, 1] &= x_4 y_3+x_6y_3 \\
	\partial [x_6y_3, -1, 0] &= 0 \\
	\partial [x_7y_3, 0, 0] &= x_6 y_3+x_8y_3 \\
	\partial [x_8y_3, 0, -1] &= 0 \\
	\partial [x_0y_4, -4, 4] &= x_0 y_3 \\
	\partial [x_1y_4, -3, 4] &= x_1 y_3+x_0y_4+x_2y_4 \\
	\partial [x_2y_4, -3, 3] &= x_2 y_3 \\
	\partial [x_3y_4, -2, 3] &= x_3 y_3+x_2y_4+x_4y_4 \\
	\partial [x_4y_4, -2, 2] &= x_4 y_3 \\
	\partial [x_5y_4, -1, 2] &= x_5 y_3+x_4y_4+x_6y_4 \\
	\partial [x_6y_4, -1, 1] &= x_6 y_3 \\
	\partial [x_7y_4, 0, 1] &= x_7 y_3+x_6y_4+x_8y_4 \\
	\partial [x_8y_4, 0, 0] &= x_8 y_3 \\	
\end{align*}

We perform the following change of basis on $\CFKi(T_{2,9} \# -T_{2,3; 2, 5})$. In the linear combinations below, we have ordered the terms so that the first basis element has the greatest filtration and thus determines the filtration level of the linear combination.
\begin{align*}
	z_0 &= x_0 y_0 \\
	z_1 &= x_0 y_1 \\
	z_2 &= x_0y_2 + x_1 y_3 + x_3 y_3 + x_4 y_4 \\
	z_3 &= x_1 y_2 \\
	z_4 &= x_2y_2 + x_3 y_3 + x_1 y_1 + x_4 y_4 \\
	z_5 &= x_3 y_2 + x_5 y_4 + x_1 y_0 \\
	z_6 &= x_4 y_2 + x_5 y_3 + x_3 y_1 + x_6 y_4 + x_2 y_0 \\
	z_7 &= x_5 y_2 + x_7 y_4 + x_3 y_0 \\
	z_8 &= x_6 y_2 + x_7 y_3 + x_5 y_1 + x_4 y_0 \\
	z_9 &= x_7 y_2 \\
	z_{10} &= x_8 y_2 + x_7 y_1 + x_4 y_0 + x_5 y_1 \\
	z_{11} &= x_8 y_3 \\
	z_{12} &= x_8 y_4 \\
	w^i_0 &= x_{2i+1} y_4 & i&=0, 1, 2, 3 \\
	w^i_1 &= x_{2i} y_4  & i&=0, 1, 2, 3 \\
	w^i_2 &= x_{2i} y_3 & i&=0, 1, 2, 3 \\
	w^i_3 &=  x_{2i+1} y_3 + x_{2i+2} y_4 & i&=0, 1, 2, 3 \\
	w^{i+4}_0 &= x_{2i+1} y_0 & i&=0, 1, 2, 3 \\
	w^{i+4}_1 &=  x_{2i+1} y_1 + x_{2i} y_0 & i&=0, 1, 2, 3 \\
	w^{i+4}_2 &= x_{2i+2} y_1 & i&=0, 1, 2, 3 \\
	w^{i+4}_3 &=  x_{2i+2} y_0 & i&=0, 1, 2, 3.
\end{align*}
See Figure \ref{fig:aftercob}.

\begin{figure}[htb!]
\vspace{5pt}
\centering
\begin{tikzpicture}

	\draw[step=1, black!30!white, very thin] (-4.9, -4.9) grid (4.9, 4.9);
	
	\begin{scope}[thin, black!60!white]
		\draw [<->] (-5, 0) -- (5, 0);
		\draw [<->] (0, -5) -- (0, 5);
	\end{scope}
	
	\filldraw (-0.2, 0) circle (2pt) node[] (a){};
	\filldraw (-1, 0) circle (2pt) node[] (b){};
	\filldraw (-1, 3) circle (2pt) node[] (c){};
	\filldraw (0, 3) circle (2pt) node[] (d){};
	\filldraw (0, 2) circle (2pt) node[] (e){};
	\filldraw (1, 2) circle (2pt) node[] (f){};
	\filldraw (1, 1) circle (2pt) node[] (g){};
	\filldraw (2, 1) circle (2pt) node[] (h){};
	\filldraw (2, 0) circle (2pt) node[] (i){};
	\filldraw (3, 0) circle (2pt) node[] (j){};
	\filldraw (3, -1) circle (2pt) node[] (k){};
	\filldraw (0, -1) circle (2pt) node[] (l){};
	\filldraw (0, -0.2) circle (2pt) node[] (m){};
	\draw [very thick, <-] (b) -- (a);
	\draw [very thick, <-] (b) -- (c);
	\draw [very thick, <-] (c) -- (d);
	\draw [very thick, <-] (e) -- (d);
	\draw [very thick, <-] (e) -- (f);
	\draw [very thick, <-] (g) -- (f);
	\draw [very thick, <-] (g) -- (h);
	\draw [very thick, <-] (i) -- (h);
	\draw [very thick, <-] (i) -- (j);
	\draw [very thick, <-] (k) -- (j);
	\draw [very thick, <-] (l) -- (k);
	\draw [very thick, <-] (l) -- (m);
	\draw [very thick, <-] (b) -- (e);
	\draw [very thick, <-] (a) -- (f);
	\draw [very thick, <-] (l) -- (i);
	\draw [very thick, <-] (m) -- (h);
	\node [below, xshift=-5pt, yshift=3pt] at (a) {$z_0$};
	\node [left] at (b) {$z_1$};
	\node [left] at (c) {$z_2$};
	\node [right] at (d) {$z_3$};
	\node [left] at (e) {$z_4$};
	\node [right] at (f) {$z_5$};
	\node [left] at (g) {$z_6$};
	\node [right] at (h) {$z_7$};
	\node [left, yshift=4pt] at (i) {$z_8$};
	\node [right] at (j) {$z_9$};
	\node [right] at (k) {$z_{10}$};
	\node [left] at (l) {$z_{11}$};
	\node [left, xshift=3pt, yshift=-5pt] at (m) {$z_{12}$};
	
	\filldraw (-3.1, 4) circle (2pt) node[] (a1){};
	\filldraw (-3.1, 3.1) circle (2pt) node[] (b1){};
	\filldraw (-4, 4) circle (2pt) node[] (c1){};
	\filldraw (-4, 3.1) circle (2pt) node[] (d1){};
	\draw [very thick, <-] (b1) -- (a1);
	\draw [very thick, <-] (c1) -- (a1);
	\draw [very thick, <-] (d1) -- (b1);
	\draw [very thick, <-] (d1) -- (c1);
	\node [above] at (a1) {$w^0_0$};
	\node [left] at (c1) {$w^0_1$};
	\node [left] at (d1) {$w^0_2$};
	\node [above left, xshift=1.5pt, yshift=-1.5pt] at (b1) {$w^0_3$};
	
	\filldraw (-2.1, 2.9) circle (2pt) node[] (a2){};
	\filldraw (-2.1, 2.1) circle (2pt) node[] (b2){};
	\filldraw (-2.9, 2.9) circle (2pt) node[] (c2){};
	\filldraw (-2.9, 2.1) circle (2pt) node[] (d2){};
	\draw [very thick, <-] (b2) -- (a2);
	\draw [very thick, <-] (c2) -- (a2);
	\draw [very thick, <-] (d2) -- (b2);
	\draw [very thick, <-] (d2) -- (c2);
	\node [above] at (a2) {$w^1_0$};
	\node [left, xshift=2pt, yshift=-5pt] at (c2) {$w^1_1$};
	\node [left] at (d2) {$w^1_2$};
	\node [above left, xshift=1.5pt, yshift=-1.5pt] at (b2) {$w^1_3$};
	
	\filldraw (-1.1, 1.9) circle (2pt) node[] (a3){};
	\filldraw (-1.1, 1.1) circle (2pt) node[] (b3){};
	\filldraw (-1.9, 1.9) circle (2pt) node[] (c3){};
	\filldraw (-1.9, 1.1) circle (2pt) node[] (d3){};
	\draw [very thick, <-] (b3) -- (a3);
	\draw [very thick, <-] (c3) -- (a3);
	\draw [very thick, <-] (d3) -- (b3);
	\draw [very thick, <-] (d3) -- (c3);
	\node [above, xshift=-5pt] at (a3) {$w^2_0$};
	\node [left, xshift=2pt, yshift=-5pt] at (c3) {$w^2_1$};
	\node [left] at (d3) {$w^2_2$};
	\node [above left, xshift=1.5pt, yshift=-1.5pt] at (b3) {$w^2_3$};
	
	\filldraw (-0.15, 0.85) circle (2pt) node[] (a4){};
	\filldraw (-0.15, 0.15) circle (2pt) node[] (b4){};
	\filldraw (-0.85, 0.85) circle (2pt) node[] (c4){};
	\filldraw (-0.85, 0.15) circle (2pt) node[] (d4){};
	\draw [very thick, <-] (b4) -- (a4);
	\draw [very thick, <-] (c4) -- (a4);
	\draw [very thick, <-] (d4) -- (b4);
	\draw [very thick, <-] (d4) -- (c4);
	
	\filldraw (0.85, -0.15) circle (2pt) node[] (a5){};
	\filldraw (0.85, -0.85) circle (2pt) node[] (b5){};
	\filldraw (0.15, -0.15) circle (2pt) node[] (c5){};
	\filldraw (0.15, -0.85) circle (2pt) node[] (d5){};
	\draw [very thick, <-] (b5) -- (a5);
	\draw [very thick, <-] (c5) -- (a5);
	\draw [very thick, <-] (d5) -- (b5);
	\draw [very thick, <-] (d5) -- (c5);

	\filldraw (1.9, -1.1) circle (2pt) node[] (a6){};
	\filldraw (1.9, -1.9) circle (2pt) node[] (b6){};
	\filldraw (1.1, -1.1) circle (2pt) node[] (c6){};
	\filldraw (1.1, -1.9) circle (2pt) node[] (d6){};
	\draw [very thick, <-] (b6) -- (a6);
	\draw [very thick, <-] (c6) -- (a6);
	\draw [very thick, <-] (d6) -- (b6);
	\draw [very thick, <-] (d6) -- (c6);
	\node [right, yshift=-4pt] at (a6) {$w^5_0$};
	\node [below right, xshift=-2pt, yshift=2pt] at (c6) {$w^5_1$};
	\node [left] at (d6) {$w^5_2$};
	\node [right, yshift=4pt] at (b6) {$w^5_3$};
	
	\filldraw (2.9, -2.1) circle (2pt) node[] (a7){};
	\filldraw (2.9, -2.9) circle (2pt) node[] (b7){};
	\filldraw (2.1, -2.1) circle (2pt) node[] (c7){};
	\filldraw (2.1, -2.9) circle (2pt) node[] (d7){};
	\draw [very thick, <-] (b7) -- (a7);
	\draw [very thick, <-] (c7) -- (a7);
	\draw [very thick, <-] (d7) -- (b7);
	\draw [very thick, <-] (d7) -- (c7);
	\node [right] at (a7) {$w^6_0$};
	\node [below right, xshift=-2pt, yshift=2pt] at (c7) {$w^6_1$};
	\node [left] at (d7) {$w^6_2$};
	\node [right, yshift=4pt] at (b7) {$w^6_3$};

	\filldraw (3.9, -3.1) circle (2pt) node[] (a8){};
	\filldraw (3.9, -3.9) circle (2pt) node[] (b8){};
	\filldraw (3.1, -3.1) circle (2pt) node[] (c8){};
	\filldraw (3.1, -3.9) circle (2pt) node[] (d8){};
	\draw [very thick, <-] (b8) -- (a8);
	\draw [very thick, <-] (c8) -- (a8);
	\draw [very thick, <-] (d8) -- (b8);
	\draw [very thick, <-] (d8) -- (c8);
	\node [right] at (a8) {$w^7_0$};
	\node [below right, xshift=-2pt, yshift=2pt] at (c8) {$w^7_1$};
	\node [left] at (d8) {$w^7_2$};
	\node [right] at (b8) {$w^7_3$};
	
\end{tikzpicture}
\caption{$\CFKi(T_{2,9} \# -T_{2,3; 2, 5})$ after a change of basis}
\label{fig:aftercob}
\end{figure}
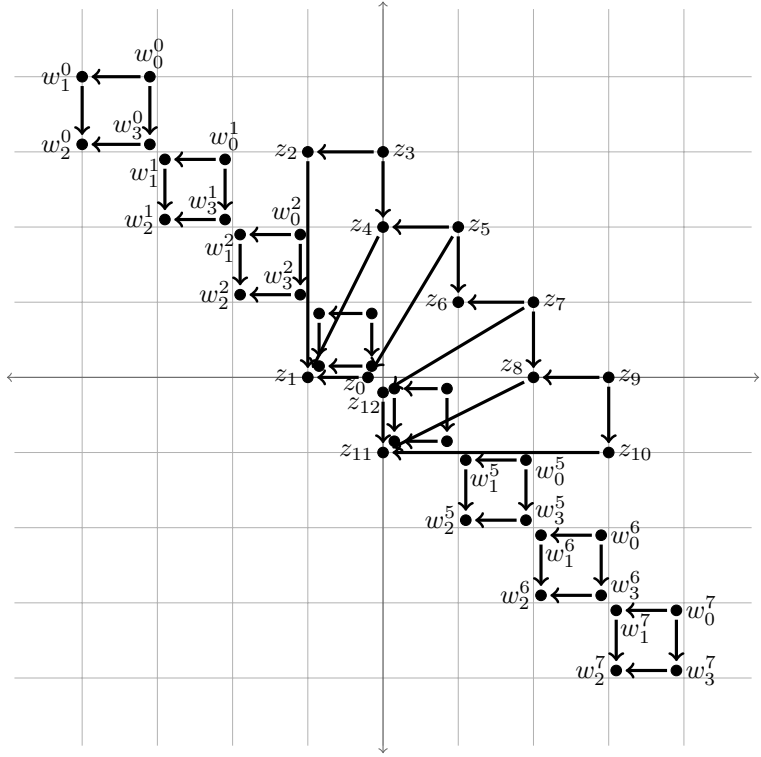

Notice that the basis elements $\{ z_i \}_{i=0}^{12}$ generate a direct summand $C$ of $\CFKi(T_{2,9} \# -T_{2,3; 2, 5})$. See Figure \ref{fig:summand}. Since the total homology of this summand is non-zero, this summand determines both $\nu$ and $\nu^+$. We write $\widehat{A}_s$ and $A^+_s$ to refer to the associated subquotient complexes of $C$.

The vertical homology of $C$ is generated by $z_0$. The generator $z_0$ in $C\{i=0\}$ is not the image of any cycle in $\widehat{A}_0$. On the other hand, $z_0$ is non-zero in $H_*(\widehat{A}_1)$. Hence $\nu(T_{2,9} \# -T_{2,3; 2, 5})=1$.

\begin{figure}[htb!]
\vspace{5pt}
\centering
\begin{tikzpicture}

	\draw[step=1, black!30!white, very thin] (-2.9, -2.9) grid (3.9, 3.9);
	
	\begin{scope}[thin, black!60!white]
		\draw [<->] (-3, 0) -- (4, 0);
		\draw [<->] (0, -3) -- (0, 4);
	\end{scope}
	
	\filldraw (-0.2, 0) circle (2pt) node[] (a){};
	\filldraw (-1, 0) circle (2pt) node[] (b){};
	\filldraw (-1, 3) circle (2pt) node[] (c){};
	\filldraw (0, 3) circle (2pt) node[] (d){};
	\filldraw (0, 2) circle (2pt) node[] (e){};
	\filldraw (1, 2) circle (2pt) node[] (f){};
	\filldraw (1, 1) circle (2pt) node[] (g){};
	\filldraw (2, 1) circle (2pt) node[] (h){};
	\filldraw (2, 0) circle (2pt) node[] (i){};
	\filldraw (3, 0) circle (2pt) node[] (j){};
	\filldraw (3, -1) circle (2pt) node[] (k){};
	\filldraw (0, -1) circle (2pt) node[] (l){};
	\filldraw (0, -0.2) circle (2pt) node[] (m){};
	\draw [very thick, <-] (b) -- (a);
	\draw [very thick, <-] (b) -- (c);
	\draw [very thick, <-] (c) -- (d);
	\draw [very thick, <-] (e) -- (d);
	\draw [very thick, <-] (e) -- (f);
	\draw [very thick, <-] (g) -- (f);
	\draw [very thick, <-] (g) -- (h);
	\draw [very thick, <-] (i) -- (h);
	\draw [very thick, <-] (i) -- (j);
	\draw [very thick, <-] (k) -- (j);
	\draw [very thick, <-] (l) -- (k);
	\draw [very thick, <-] (l) -- (m);
	\draw [very thick, <-] (b) -- (e);
	\draw [very thick, <-] (a) -- (f);
	\draw [very thick, <-] (l) -- (i);
	\draw [very thick, <-] (m) -- (h);
	\node [above left, xshift=4pt] at (a) {$z_0$};
	\node [left] at (b) {$z_1$};
	\node [left] at (c) {$z_2$};
	\node [right] at (d) {$z_3$};
	\node [left] at (e) {$z_4$};
	\node [right] at (f) {$z_5$};
	\node [left] at (g) {$z_6$};
	\node [right] at (h) {$z_7$};
	\node [left, yshift=4pt] at (i) {$z_8$};
	\node [right] at (j) {$z_9$};
	\node [right] at (k) {$z_{10}$};
	\node [left] at (l) {$z_{11}$};
	\node [right, yshift=-3pt] at (m) {$z_{12}$};
	
\end{tikzpicture}
\caption{The relevant summand of $\CFKi(T_{2,9} \# -T_{2,3; 2, 5})$}
\label{fig:summand}
\end{figure}
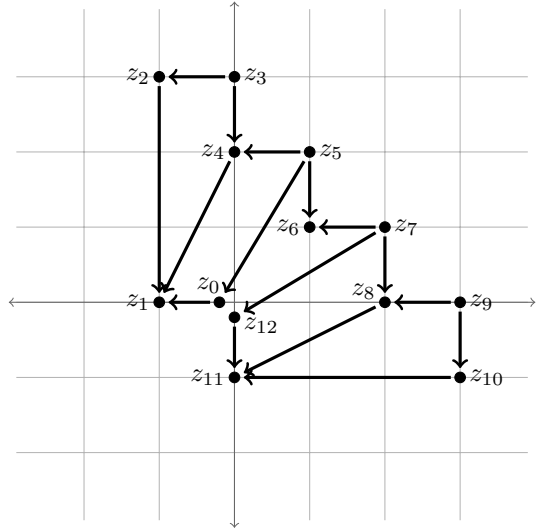

\begin{figure}[htb!]
\vspace{5pt}
\centering
\begin{tikzpicture}[scale=1]

	\draw[step=1, black!30!white, very thin] (-2.9, -2.9) grid (3.9, 3.9);
	
	\begin{scope}[thin, black!60!white]
		\draw [<->] (-3, 0) -- (4, 0);
		\draw [<->] (0, -3) -- (0, 4);
	\end{scope}
		
	\filldraw (-1.2, -1) circle (2pt) node[] (a){};
	\filldraw (-2, -1) circle (2pt) node[] (b){};
	\filldraw (-2, 2) circle (2pt) node[] (c){};
	\filldraw (-1, 2) circle (2pt) node[] (d){};
	\filldraw (-1, 1) circle (2pt) node[] (e){};
	\filldraw (0, 1) circle (2pt) node[] (f){};
	\filldraw (0, 0) circle (2pt) node[] (g){};
	\filldraw (1, 0) circle (2pt) node[] (h){};
	\filldraw (1, -1) circle (2pt) node[] (i){};
	\filldraw (2, -1) circle (2pt) node[] (j){};
	\filldraw (2, -2) circle (2pt) node[] (k){};
	\filldraw (-1, -2) circle (2pt) node[] (l){};
	\filldraw (-1, -1.2) circle (2pt) node[] (m){};
	\draw [very thick, <-] (b) -- (a);
	\draw [very thick, <-] (b) -- (c);
	\draw [very thick, <-] (c) -- (d);
	\draw [very thick, <-] (e) -- (d);
	\draw [very thick, <-] (e) -- (f);
	\draw [very thick, <-] (g) -- (f);
	\draw [very thick, <-] (g) -- (h);
	\draw [very thick, <-] (i) -- (h);
	\draw [very thick, <-] (i) -- (j);
	\draw [very thick, <-] (k) -- (j);
	\draw [very thick, <-] (l) -- (k);
	\draw [very thick, <-] (l) -- (m);
	\draw [very thick, <-] (b) -- (e);
	\draw [very thick, <-] (a) -- (f);
	\draw [very thick, <-] (l) -- (i);
	\draw [very thick, <-] (m) -- (h);
	\node [left] at (c) {$U z_2$};
	\node [right] at (d) {$U z_3$};
	\node [left, yshift=2pt] at (e) {$U z_4$};
	\node [right] at (f) {$U z_5$};
	\node [below] at (g) {$U z_6$};
	\node [right] at (h) {$U z_7$};
	\node [left, yshift=4pt] at (i) {$U z_8$};
	\node [right] at (j) {$U z_9$};
	\node [right] at (k) {$U z_{10}$};
	
	\filldraw[black!20!white, pattern=north west lines,] (-3, -3) rectangle (-0.15, 0.85);
	
\end{tikzpicture}
\caption{The generators $\{Uz_i\}$ in $A^+_1$}
\label{fig:A_1}
\end{figure}

\begin{figure}[htb!]
\vspace{5pt}
\centering
\begin{tikzpicture}[scale=1]

	\draw[step=1, black!30!white, very thin] (-2.9, -2.9) grid (3.9, 3.9);
	
	\begin{scope}[thin, black!60!white]
		\draw [<->] (-3, 0) -- (4, 0);
		\draw [<->] (0, -3) -- (0, 4);
	\end{scope}
		
	\filldraw (-1.2, -1) circle (2pt) node[] (a){};
	\filldraw (-2, -1) circle (2pt) node[] (b){};
	\filldraw (-2, 2) circle (2pt) node[] (c){};
	\filldraw (-1, 2) circle (2pt) node[] (d){};
	\filldraw (-1, 1) circle (2pt) node[] (e){};
	\filldraw (0, 1) circle (2pt) node[] (f){};
	\filldraw (0, 0) circle (2pt) node[] (g){};
	\filldraw (1, 0) circle (2pt) node[] (h){};
	\filldraw (1, -1) circle (2pt) node[] (i){};
	\filldraw (2, -1) circle (2pt) node[] (j){};
	\filldraw (2, -2) circle (2pt) node[] (k){};
	\filldraw (-1, -2) circle (2pt) node[] (l){};
	\filldraw (-1, -1.2) circle (2pt) node[] (m){};
	\draw [very thick, <-] (b) -- (a);
	\draw [very thick, <-] (b) -- (c);
	\draw [very thick, <-] (c) -- (d);
	\draw [very thick, <-] (e) -- (d);
	\draw [very thick, <-] (e) -- (f);
	\draw [very thick, <-] (g) -- (f);
	\draw [very thick, <-] (g) -- (h);
	\draw [very thick, <-] (i) -- (h);
	\draw [very thick, <-] (i) -- (j);
	\draw [very thick, <-] (k) -- (j);
	\draw [very thick, <-] (l) -- (k);
	\draw [very thick, <-] (l) -- (m);
	\draw [very thick, <-] (b) -- (e);
	\draw [very thick, <-] (a) -- (f);
	\draw [very thick, <-] (l) -- (i);
	\draw [very thick, <-] (m) -- (h);
	\node [left] at (c) {$U z_2$};
	\node [right] at (d) {$U z_3$};
	\node [right] at (f) {$U z_5$};
	\node [below] at (g) {$U z_6$};
	\node [right] at (h) {$U z_7$};
	\node [left, yshift=4pt] at (i) {$U z_8$};
	\node [right] at (j) {$U z_9$};
	\node [right] at (k) {$U z_{10}$};
	
	\filldraw[black!20!white, pattern=north west lines,] (-3, -3) rectangle (-0.15, 1.85);
	
\end{tikzpicture}
\caption{The generators $\{Uz_i\}$ in $A^+_2$}
\label{fig:A_2}
\end{figure}

The cycle $z_6$ generates $H_*(C)$. Moreover, the cycle $Uz_6$ is non-zero in $H_*(A^+_1)$; see Figure \ref{fig:A_1}. The cycle $Uz_6$ is a boundary in $A^+_2$ as in Figure \ref{fig:A_2}, while the cycle $z_6$ is non-zero in $H_*(A^+_2)$. It follows that $\nu^+(T_{2,9} \# -T_{2,3; 2,5})=2$, as desired.
\end{proof}

\begin{cor}
Let $K=T_{2,5} \# 2 T_{2,3} \# -T_{2,3; 2,5}$. Then
\[ \tau(K) = 0, \qquad \nu(K) = 1, \quad \textup{ and } \quad \nu^+(K) = 2. \]
\end{cor}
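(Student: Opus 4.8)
The plan is to reduce the computation to the previous Proposition by observing that $K$ is obtained from $T_{2,9}\#-T_{2,3;2,5}$ by replacing the summand $T_{2,9}$ with $T_{2,5}\#2T_{2,3}$, and that these two substituted pieces have filtered complexes which differ only by acyclic summands once tensored with $\CFKi(-T_{2,3;2,5})$. First I would record that $T_{2,5}\#2T_{2,3}$ is a connected sum of alternating knots, hence alternating, hence thin, and that $\tau(T_{2,5}\#2T_{2,3})=\tau(T_{2,5})+2\tau(T_{2,3})=2+1+1=4=\tau(T_{2,9})$. In particular $\tau(K)=\tau(T_{2,5}\#2T_{2,3})+\tau(-T_{2,3;2,5})=4-4=0$ by additivity of $\tau$, which disposes of the first claimed value.

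For the structural input I would invoke the classification of the filtered chain homotopy type of thin knots (due to Petkova): for a thin knot $J$, the complex $\CFKi(J)$ is filtered chain homotopy equivalent to the direct sum of a single staircase complex, determined by $\tau(J)$, together with a number of square (box) summands. Since $\tau(T_{2,5}\#2T_{2,3})=4$, the staircase summand is exactly $\CFKi(T_{2,9})$, so $\CFKi(T_{2,5}\#2T_{2,3})\simeq\CFKi(T_{2,9})\oplus\bigoplus_i B_i$ with each $B_i$ a box. Tensoring with $\CFKi(-T_{2,3;2,5})$ and applying the K\"unneth formula, I obtain $\CFKi(K)\simeq\CFKi(T_{2,9}\#-T_{2,3;2,5})\oplus N$, where $N=\bigoplus_i B_i\otimes\CFKi(-T_{2,3;2,5})$. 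Each box $B_i$ is acyclic over $\bbF[U,U^{-1}]$, so $N$ is acyclic.

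It remains to argue that the summand $N$ affects neither $\nu$ nor $\nu^+$. Here I would use that both $K$ and $T_{2,9}\#-T_{2,3;2,5}$ are knots in $S^3$. The decomposition $\CFKi(K)\simeq\CFKi(T_{2,9}\#-T_{2,3;2,5})\oplus N$ respects the subquotient complexes, so $B^+(K)=B^+(T_{2,9}\#-T_{2,3;2,5})\oplus B^+(N)$, and likewise for $\widehat B$, $A^+_k$, $\widehat A_k$, with the maps $v^+_k,\widehat v_k$ splitting accordingly. Since $H_*(B^+(K))\cong H_*(B^+(T_{2,9}\#-T_{2,3;2,5}))\cong\mathcal T^+$ (both being $CF^+(S^3)$), the extra summand $B^+(N)$ is acyclic; the same argument with $\widehat{CF}(S^3)$ shows $\widehat B(N)$ is acyclic. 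Consequently the tower $\mathcal T^+$ in $H_*(B^+(K))$ and the generator of $H_*(\widehat B(K))$ lie entirely in the $T_{2,9}\#-T_{2,3;2,5}$ summand; and because $N$ is acyclic, $HF^\infty(K)$ also comes from that summand, so the image of $HF^\infty$ in $H_*(A^+_k(K))$ lies in $A^+_k(T_{2,9}\#-T_{2,3;2,5})$. Therefore $V_k(K)=V_k(T_{2,9}\#-T_{2,3;2,5})$ for all $k$, and $\widehat v_k(K)$ is nontrivial on homology exactly when $\widehat v_k(T_{2,9}\#-T_{2,3;2,5})$ is, whence $\nu^+(K)=2$ and $\nu(K)=1$ by the Proposition.

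The main obstacle is the structural reduction of the second paragraph. The naive hope that $T_{2,5}\#2T_{2,3}$ and $T_{2,9}$ have the same complex up to acyclic summands is false, since their Alexander polynomials differ; the box summands contribute nontrivially to $\widehat{HFK}$ and only become acyclic after tensoring with the staircase complex $\CFKi(-T_{2,3;2,5})$. Moreover acyclicity of $N$ by itself is not enough, because $H_*(N)=0$ does not force the subquotient $H_*(N\{i\ge0\})$ to vanish; the key point I would stress is that it is the $S^3$ constraint which guarantees that $B^+(N)$ and $\widehat B(N)$ are acyclic, and hence that $N$ contributes trivially to the relevant maps.
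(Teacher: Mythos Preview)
Your main argument (paragraphs 1--3) is correct and is essentially the paper's proof: both use that $\CFKi(T_{2,5}\#2T_{2,3})\simeq\CFKi(T_{2,9})\oplus A$ with $A$ acyclic (the paper cites Hedden--Kim--Livingston's Theorem B.1; you invoke Petkova's classification of thin complexes, which yields the same staircase-plus-boxes decomposition). Your third paragraph even fills in why acyclic summands do not affect $\nu$ and $\nu^+$, a point the paper simply asserts.

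Your final paragraph, however, contradicts your own second paragraph and should be deleted or rewritten. The box summands $B_i$ are already acyclic in $\CFKi$ \emph{before} any tensoring: a square has four generators and the two horizontal and two vertical arrows make its total homology vanish over $\bbF[U,U^{-1}]$. The Alexander polynomial is computed from $\widehat{HFK}$, the homology of the \emph{associated graded} of the Alexander filtration, not from the total homology of $\CFKi$; boxes contribute nontrivially to $\widehat{HFK}$ precisely because every arrow in a box shifts filtration and hence vanishes on the associated graded. So ``different Alexander polynomials'' is entirely compatible with ``same $\CFKi$ up to acyclic summands,'' and the ``naive hope'' you declare false is in fact true---and is exactly what you used correctly in paragraph 2. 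Your caution that acyclicity of $N$ alone need not force $H_*(N\{i\geq 0\})=0$, and that one uses the $S^3$ constraint to see $B^+(N)$ and $\widehat B(N)$ are acyclic, is a valid and worthwhile remark; keep that and drop the mistaken part.
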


\begin{proof}
By \cite[Theorem B.1]{HeddenKimLiv},
\[ \CFKi(T_{2,5} \# 2T_{2,3}) \cong \CFKi(T_{2,9}) \oplus A, \]
where $A$ is acyclic (i.e., its total homology vanishes). Since acyclic summands do not affect $\tau$, $\nu$, and $\nu^+$, the result follows.
\end{proof}

\begin{lem}\label{lem:4ballK}
Let $K=T_{2,5} \# 2 T_{2,3} \# -T_{2,3; 2,5}$. Then $g_4(K) = 2$.
\end{lem}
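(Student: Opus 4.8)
The lower bound is immediate. By the corollary above we have $\nu^+(K)=2$, and since $\nu^+\le g_4$, this gives $g_4(K)\ge 2$. The entire content of the lemma is therefore the reverse inequality $g_4(K)\le 2$, and my plan is to establish it geometrically.

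For the upper bound I would use the standard identification of the four-ball genus of a connected sum with a mirror as a cobordism distance: for knots $A$ and $B$, the quantity $g_4(A\#-B)$ equals the minimal genus of a smoothly embedded, connected, oriented cobordism in $S^3\times[0,1]$ from $A$ to $B$. Applying this with $A=T_{2,5}\#2T_{2,3}$ and $B=T_{2,3;2,5}$, it suffices to produce a genus-$2$ cobordism between $T_{2,3;2,5}$ and $T_{2,5}\#2T_{2,3}$. This target is consistent with the classical obstructions: both knots have $\tau=4$, while $\sigma(T_{2,5}\#2T_{2,3})=-8$ and, by Litherland's satellite signature formula (the winding number is $2$, so the companion contributes $\sigma_{(-1)^2}=\sigma_1=0$), $\sigma(T_{2,3;2,5})=\sigma(T_{2,5})=-4$. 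Hence $\tfrac12|\sigma(A)-\sigma(B)|=2$, which independently reproduces $g_4(K)\ge 2$ and shows that no cobordism of smaller genus can exist, in agreement with $\nu^+$.

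The cobordism itself is where the work lies, and I would build it from the cable structure. The knot $T_{2,3;2,5}$ is the $(2,5)$-cable of $T_{2,3}$, a satellite of winding number $2$ whose pattern, applied to the unknot, is $T_{2,5}$, and whose companion $T_{2,3}$ satisfies $g_4(T_{2,3})=1$. The plan is to begin with a genus-$1$ cobordism realizing the single crossing change that unknots $T_{2,3}$ and to \emph{un-cable} it: because the pattern runs longitudinally with winding number $2$, each of the two strands carries a copy of the companion, so resolving the cabling into a connected sum doubles this genus-$1$ modification while splitting off two connect-summands $T_{2,3}$ and closing the pattern box up to $T_{2,5}$. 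Carried out carefully, this produces a connected cobordism from $T_{2,3;2,5}$ to $T_{2,5}\#2T_{2,3}$ of Euler characteristic $-4$, i.e.\ of genus $2$. Equivalently, one may phrase the same construction as two crossing changes converting $T_{2,3;2,5}$ into $T_{2,5}\#2T_{2,3}$; performing these on the $-T_{2,3;2,5}$ summand turns $K$ into the slice knot $(T_{2,5}\#2T_{2,3})\#-(T_{2,5}\#2T_{2,3})$, and since two crossing changes cost genus at most $2$, again $g_4(K)\le 2$.

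The main obstacle is precisely the genus bookkeeping in the un-cabling step: one must check that resolving the $(2,5)$-cabling of the unknotting cobordism costs genus exactly $2$ and no more, that the two ends are correctly identified as $T_{2,3;2,5}$ and $T_{2,5}\#2T_{2,3}$ (including the framing and twisting conventions of the cable), and that the two copies of $T_{2,3}$ appear as genuine connected summands rather than being reabsorbed into the pattern. This is presumably the reason the statement is phrased using $T_{2,5}\#2T_{2,3}$ rather than $T_{2,9}$, even though the two have the same knot Floer complex up to acyclic summands: the former matches the cable decomposition of $T_{2,3;2,5}$ and makes the genus-$2$ cobordism transparent, whereas a direct cobordism from $T_{2,9}$ would be less natural. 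Once the cobordism is in hand, combining $g_4(K)\le 2$ with the lower bound $g_4(K)\ge 2$ yields $g_4(K)=2$.
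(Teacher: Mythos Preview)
Your overall strategy is right, and your observation about why the paper uses $T_{2,5}\#2T_{2,3}$ rather than $T_{2,9}$ is exactly the point: the connected-sum decomposition is tailored to the cable structure of $T_{2,3;2,5}$, so that a small genus cobordism becomes visible. The lower bound via $\nu^+$ is also handled just as the paper does.

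However, the entire content of the lemma is the upper bound, and you have not actually established it. You reduce to producing a genus-$2$ cobordism from $T_{2,3;2,5}$ to $T_{2,5}\#2T_{2,3}$, then describe an ``un-cabling'' heuristic and assert that it can be rephrased as ``two crossing changes''; but you never exhibit either the cobordism or the crossing changes, and you yourself flag the bookkeeping as the main obstacle. The heuristic is not self-evidently correct: a single crossing change on the companion $T_{2,3}$ lifts, under a winding-number-$2$ cable, to a $2$-over-$2$ bundle crossing (four crossings of the satellite), not to two ordinary crossing changes, so ``doubling the genus-$1$ modification'' requires a genuine argument that the extra crossings can be absorbed into the $T_{2,3}$ connect-summands rather than contributing additional genus. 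As written, this step is a plan, not a proof.

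The paper closes exactly this gap, and by a different mechanism than crossing changes. It builds an explicit genus-$4$ Seifert surface $F$ for $-T_{2,3;2,5}$ out of two parallel copies of a genus-$1$ Seifert surface for $-T_{2,3}$ joined by five half-twisted bands, and observes that a specific separating curve on $F$ is the knot $-T_{2,3}\#T_{-2,5}$. Boundary-summing $F$ with Seifert surfaces for $T_{2,5}$ and two copies of $T_{2,3}$ gives a genus-$8$ Seifert surface $F'$ for $K$ on which the slice knot $J=-T_{2,3}\#T_{-2,5}\#T_{2,3}\#T_{2,5}$ sits as a genus-$6$ separating curve; compressing $F'$ along a slice disk for $J$ in $B^4$ yields a genus-$2$ slice surface for $K$. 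If you want to salvage your approach, the cleanest fix is to adopt this Seifert-surface-plus-slice-surgery construction in place of the unspecified crossing changes.
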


\begin{proof}
When $p, q >0$, the genus of $T_{p, q}$ is equal to $\frac{(p-1)(q-1)}{2}$. We can construct a genus $4$ Seifert surface $F$ for $-T_{2,3; 2,5}= (-T_{2,3})_{-2, 5}$ by taking two parallel copies of the genus one Seifert surface for $-T_{2,3}$ and connecting them with $5$ half-twisted bands. The knot $-T_{2,3} \# T_{-2, 5}$ sits on $F$. To see this, consider one copy of the Seifert surface for $-T_{2,3}$ together with the half-twisted bands and a small neighborhood of a segment connecting the ends of the bands.

Take the boundary sum of $F$ with the genus two Seifert surface for $T_{2,5}$ and with two copies of the genus one Seifert surface for $T_{2,3}$ to obtain a surface $F'$. The surface $F'$ is a genus $8$ Seifert surface for $K$. The genus $6$ slice knot $J=-T_{2,3} \# T_{-2, 5} \# T_{2,3} \# T_{2,5}$ sits on this surface. Performing surgery along $J$ on $F'$ in $B^4$ yields a genus two slice surface for $K$. Since $\nu^+(K) = 2$ and $\nu^+(K) \leq g_4(K)$, it follows that $g_4(K)=2$.
\end{proof}

In order to prove the main theorem, we will consider certain cables of the knot $K=T_{2,5} \# 2 T_{2,3} \# -T_{2,3; 2,5}$. We first compute $\tau$ of these cables.

\begin{lem}
Let $K$ be the knot $T_{2,5} \# 2 T_{2,3} \# -T_{2,3; 2,5}$.  Then $$\tau(K_{p,3p-1})=\frac{3p(p-1)}{2}.$$
\end{lem}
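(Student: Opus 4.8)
The plan is to obtain $\tau(K_{p,3p-1})$ from Hom's formula for the $\tau$-invariant of cables \cite{Homcable}, which reduces the computation to the single invariant $\epsilon(K)\in\{-1,0,1\}$ of \cite{Homsmooth}. For $p>0$ and $q>0$ that formula reads
\[
\tau(K_{p,q}) =
\begin{cases}
p\,\tau(K) + \frac{(p-1)(q-1)}{2} & \text{if } \epsilon(K) \geq 0,\\
p\,\tau(K) + \frac{(p-1)(q+1)}{2} & \text{if } \epsilon(K) = -1.
\end{cases}
\]
Since we have already shown $\tau(K)=0$, with $q = 3p-1>0$ the two cases evaluate to $\frac{(p-1)(3p-2)}{2}$ and $\frac{3p(p-1)}{2}$ respectively. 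Thus everything comes down to proving that $\epsilon(K) = -1$.

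To compute $\epsilon(K)$, recall from the Corollary above that $\CFKi(K) \cong \CFKi(T_{2,9}\#-T_{2,3;2,5}) \oplus A'$ with $A'$ acyclic, and that $\CFKi(T_{2,9}\#-T_{2,3;2,5})$ in turn splits as $C \oplus A''$, where $C = \langle z_0,\dots,z_{12}\rangle$ is the summand of Figure \ref{fig:summand} and $A''$ is acyclic. As $\epsilon$ is unchanged by acyclic summands, it is determined by $C$. By definition \cite{Homsmooth}, working in a basis that is simultaneously vertically and horizontally simplified and letting $x_0$ denote the distinguished generator of the vertical homology, one has $\epsilon = -1$ precisely when $x_0$ supports a nonzero horizontal differential (and $\epsilon = 1$ when $x_0$ lies in the image of the horizontal differential). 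In $C$ the vertical homology is generated by $z_0$, sitting in filtration level $(0,0)$ in accordance with $\tau = 0$, and the differentials computed above give $\partial z_0 = z_1 = x_0 y_1$, an arrow that preserves the $j$-coordinate and lowers $i$ by one, i.e.\ a horizontal differential. Hence $z_0$ supports a nonzero horizontal differential and $\epsilon(K) = -1$.

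Feeding $\tau(K)=0$, $\epsilon(K)=-1$, and $q = 3p-1$ into the formula then yields
\[
\tau(K_{p,3p-1}) = \frac{(p-1)\big((3p-1)+1\big)}{2} = \frac{3p(p-1)}{2},
\]
as claimed. The main obstacle is the determination that $\epsilon(K) = -1$: one must check that the basis $\{z_i\}$ for $C$ may indeed be taken simultaneously vertically and horizontally simplified with $z_0$ as the distinguished vertical generator, and in particular that $z_0$ is not also in the image of any horizontal differential. The only arrow leaving $z_0$ is the horizontal arrow to $z_1$, while the only arrow entering $z_0$ (namely the one from $z_5$) is strictly diagonal and therefore irrelevant to $\epsilon$; this makes the reading $\epsilon(K)=-1$ legitimate, but it is the step requiring the most care.
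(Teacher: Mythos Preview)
Your argument is correct and lands on the same formula from \cite{Homcables}, but you take a longer road to $\varepsilon(K)=-1$ than the paper does. The paper simply invokes the characterization (essentially \cite[Definition~3.4]{Homcables}) that $\varepsilon(K)=-1$ exactly when $\tau(K)<\nu(K)$; since the preceding Corollary already established $\tau(K)=0$ and $\nu(K)=1$, the value $\varepsilon(K)=-1$ is immediate with no further inspection of the complex. Your route---reading $\varepsilon$ directly off the summand $C$ by checking that the distinguished vertical generator $z_0$ has an outgoing horizontal arrow---is a legitimate alternative, and the step you flag as delicate (that $\{z_i\}$ is simultaneously vertically and horizontally simplified) is in fact easily confirmed from Figure~\ref{fig:summand}: the purely horizontal arrows pair $\{z_0,z_3,z_5,z_7,z_9,z_{10}\}$ with $\{z_1,z_2,z_4,z_6,z_8,z_{11}\}$ leaving $z_{12}$ free, and the purely vertical arrows pair $\{z_2,z_3,z_5,z_7,z_9,z_{12}\}$ with $\{z_1,z_4,z_6,z_8,z_{10},z_{11}\}$ leaving $z_0$ free, while the remaining arrows are strictly diagonal and hence invisible to both associated graded differentials. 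So your concern is resolvable, but the paper's shortcut via $\nu$ avoids the check entirely.
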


\begin{proof}
Recall from \cite[Definition 3.4]{Homcables} that the invariant $\varepsilon(K)$ is defined to be $-1$ if $\tau(K)<\nu(K)$. The equality then follows from \cite[Theorem 1]{Homcables}, which states that if $\varepsilon(K)=-1$, then
\[ \tau(K_{p,q}) = p\tau(K) + \frac{(p-1)(q+1)}{2}. \]
\end{proof}

\begin{prop} \label{prop:nuexamples}

Let $K$ be the knot $T_{2,5} \# 2 T_{2,3} \# -T_{2,3; 2,5}$.  Then $$\nu^+(K_{p,3p-1})=g_4(K_{p, 3p-1})=\frac{p(3p-1)}{2}+1.$$

\end{prop}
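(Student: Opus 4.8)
The plan is to prove the two inequalities
$g_4(K_{p,q}) \le \tfrac{p(3p-1)}{2}+1$ and $\nu^+(K_{p,q}) \ge \tfrac{p(3p-1)}{2}+1$, where I write $q = 3p-1$ throughout and set $N := \tfrac{p(3p-1)}{2}+1$. Since the bound $\nu^+\le g_4$ established above gives $\nu^+(K_{p,q}) \le g_4(K_{p,q})$, these two inequalities sandwich both invariants at the common value $N$, which is exactly the asserted equality.

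For the upper bound I would build an explicit slice surface by the standard satellite (cabling) construction: take $p$ parallel push-offs of a genus $g_4(K)=2$ slice surface for $K$ (Lemma \ref{lem:4ballK}) and cap them off inside the solid torus with a Seifert surface for the torus pattern $T_{p,q}$, which has genus $\tfrac{(p-1)(q-1)}{2}$. This produces a slice surface for $K_{p,q}$ of genus $p\cdot g_4(K) + \tfrac{(p-1)(q-1)}{2}$. Substituting $g_4(K)=2$ and $q=3p-1$, a short computation gives exactly $N$, so $g_4(K_{p,q}) \le N$.

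The heart of the argument is the lower bound, for which it suffices to show $V_{pq/2}(K_{p,q}) > 0$: indeed $pq/2 = N-1$, so positivity of this single term yields $\nu^+(K_{p,q}) \ge pq/2 + 1 = N$. I would isolate $V_{pq/2}(K_{p,q})$ from a correction term. Evaluating Proposition \ref{prop:Corr} on the integer surgery $S^3_{pq}(K_{p,q})$ at the self-conjugate index $i = pq/2$, the symmetry $H_k = V_{-k}$ of \eqref{eqn:HV} collapses the maximum to $V_{pq/2}(K_{p,q})$, giving
\[ d(S^3_{pq}(K_{p,q}), pq/2) = d(L(pq,1),pq/2) - 2V_{pq/2}(K_{p,q}). \]
Now I invoke Gordon's cabling surgery formula $S^3_{pq}(K_{p,q}) \cong S^3_{q/p}(K)\# L(p,q)$ together with its unknot specialization (Moser's reducible torus-knot surgery) $S^3_{pq}(T_{p,q}) \cong L(q,p)\#L(p,q)$, where $S^3_{q/p}(\mathrm{unknot}) = L(q,p)$. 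Comparing the $d$-invariants of $K_{p,q}$ against those of $T_{p,q}$ in the matching $\Spinc$ structure, the lens-space summands cancel, leaving
\[ V_{pq/2}(K_{p,q}) - V_{pq/2}(T_{p,q}) = \tfrac12\big( d(L(q,p),\mathfrak{s}) - d(S^3_{q/p}(K),\mathfrak{s}) \big), \]
and applying Proposition \ref{prop:Corr} to the two $q/p$-surgeries rewrites the right-hand side as $\max\{V_{\lfloor i'/p\rfloor}(K), H_{\lfloor (i'-q)/p\rfloor}(K)\}$ for the index $i'$ corresponding to $\mathfrak{s}$. Since $pq/2 > g_4(T_{p,q})$, one has $V_{pq/2}(T_{p,q}) = 0$, so $V_{pq/2}(K_{p,q})$ equals this companion contribution.

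Finally I would pin down the companion index: for $q = 3p-1$ the self-conjugate structure forces $\lfloor i'/p\rfloor = 1$ (this is where the choice $q = 3p-1$ is calibrated, since $q/(2p) = \tfrac32 - \tfrac{1}{2p}$ has floor $1$), so the contribution is $V_1(K)$. From $\nu^+(K) = 2$ proved above we have $V_1(K) \ge 1$, whence $V_{pq/2}(K_{p,q}) = V_1(K) > 0$ and the lower bound follows. The main obstacle is precisely this last bookkeeping step: correctly matching the self-conjugate $\Spinc$ structure on $S^3_{pq}(K_{p,q})$ through Gordon's diffeomorphism and the connected-sum decomposition to the correct $\Spinc$ structure — hence the correct index $\lfloor i'/p\rfloor$ — on $S^3_{q/p}(K)$, and confirming that this index is $1$ rather than $2$, since $V_2(K)=0$ would give only a weaker bound. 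The exact values $V_0(K),V_1(K)$ can be read off the summand $C$ of Figure \ref{fig:summand} if needed, but only the positivity $V_1(K)>0$, already guaranteed by $\nu^+(K)=2$, is essential.
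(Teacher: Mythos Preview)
Your approach is essentially the paper's: same upper bound via the cabling slice surface, same lower bound via Gordon's reducible surgery $S^3_{pq}(K_{p,q})\cong S^3_{q/p}(K)\# L(p,q)$ compared against its unknot specialization, yielding
\[
V_i(K_{p,q}) = V_i(T_{p,q}) + \max\{V_{\lfloor p_1(i)/p\rfloor}(K),\,H_{\lfloor (p_1(i)-q)/p\rfloor}(K)\}.
\]
The one substantive difference is that you restrict to the single index $i=pq/2$ and then try to pin down $p_1(pq/2)$ exactly in order to conclude $\lfloor i'/p\rfloor=1$. You correctly flag this $\Spinc$ bookkeeping as the main obstacle, and indeed your heuristic (self-conjugate goes to self-conjugate, hence $i'\approx q/2$) is not fully justified as written and runs into parity issues when $q$ is even.

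The paper sidesteps this entirely: rather than identifying $p_1(i)$, it observes that whatever value $p_1(i)\in\{0,\dots,q-1\}$ takes, the max term is positive. If $0\le p_1(i)<2p$ then $\lfloor p_1(i)/p\rfloor\in\{0,1\}$ and $V_0(K),V_1(K)>0$ since $\nu^+(K)=2$; if $2p\le p_1(i)<q=3p-1$ then $\lfloor(p_1(i)-q)/p\rfloor\in\{-1,0\}$ and $H_{-1}(K)=V_1(K)>0$, $H_0(K)=V_0(K)>0$. This two-case check is exactly what makes the choice $q=3p-1$ work, and it requires no knowledge of the $\Spinc$ correspondence through the diffeomorphism. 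Dropping your attempt to compute $i'$ and replacing it with this case split removes the gap and in fact gives the stronger conclusion $V_i(K_{p,q})>0$ for all $i\le pq/2$ at no extra cost.
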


\begin{proof}

Let $p, q >0$. For a cable knot $K_{p,q}$, there is a reducible surgery $$S^3_{pq}(K_{p,q})\cong S^3_{q/p}(K) \# L(p,q). $$
We apply the surgery formula (\ref{Corr}) for the above knot surgery when $K$ is the unknot. Note that $\max \{V_i, H_{i-pq}\} = V_i$ when $0 \leq i \leq \frac{pq}{2}$ since $V_i = H_{-i}$ and $H_{i-1} \leq H_i$. Thus, we have
\begin{equation}\label{eq1}
d(L(pq, 1),i)-2V_i(T_{p,q})=d(L(q,p), p_1(i))+d(L(p,q), p_2(i))
\end{equation}
for all $0 \leq i\leq \frac{pq}{2}$.

Here, we identify the \spinc structure of a rational homology sphere by an integer $i$ as in \cite{NiWu}, and $p_1(i)$ and $p_2(i)$ are the projection of the \spinc structure to the two factors of the reducible manifold.  In particular, we can identify $p_1(i)$ with some integers between 0 and $q-1$ and $p_2(i)$ with some integers between 0 and $p-1$.

We can also apply (\ref{Corr}) for an arbitrary knot $K$.  We have

\begin{eqnarray*}\label{eq2}
d(L(pq, 1),i)-2V_i(K_{p,q})&=&d(L(q,p), p_1(i))
-2\max\{V_{\lfloor\frac{p_1(i)}p\rfloor}(K),H_{\lfloor\frac{p_1(i)-q}p\rfloor}(K)\}\\ && +d(L(p,q), p_2(i)).
\end{eqnarray*}
for all $i\leq \frac{pq}{2}$.

Compared with Equation (\ref{eq1}) and using the fact $V_i(T_{p,q})\geq0$, we deduce that for all $i\leq \frac{pq}{2}$,
\begin{eqnarray*}
V_i(K_{p,q})&=&V_i(T_{p,q})+\max\{V_{\lfloor\frac{p_1(i)}p\rfloor}(K),H_{\lfloor\frac{p_1(i)-q}p\rfloor}(K)\}\\
    &\geq&\max\{V_{\lfloor\frac{p_1(i)}p\rfloor}(K),H_{\lfloor\frac{p_1(i)-q}p\rfloor}(K)\}
\end{eqnarray*}

From now on, let us specialize to the case when $K$ is the knot $T_{2,5} \# 2 T_{2,3} \# -T_{2,3; 2,5}$ and $q=3p-1$. We claim that
$$\max\{V_{\lfloor\frac{p_1(i)}p\rfloor},H_{\lfloor\frac{p_1(i)-q}p\rfloor}\}>0.$$  To see this, note that $V_0(K),V_1(K) >0$ as $\nu^+(K)=2$.  When $0\leq p_1(i) <2p$, $V_{\lfloor\frac{p_1(i)}p\rfloor}(K)>0$.
Otherwise, $2p\leq p_1(i) <q=3p-1$, and then $H_{\lfloor\frac{p_1(i)-q}p\rfloor}(K)>0$ since $H_{-k} = V_k$ and $V_0(K), V_1(K) > 0$.

Hence, $V_i(K_{p,q})>0$ for all $i\leq \frac{pq}{2}$.  This implies that
$$\nu^+(K_{p,3p-1}) \geq \frac{p(3p-1)}{2}+1.$$
On the other hand,
\[ g_4(K_{p, q}) \leq pg_4(K)+ \frac{(p-1)(q-1)}{2}, \]
since one can construct a slice surface for $K_{p,q}$ from $p$ parallel copies of a slice surface for $K$ together with $(p-1)q$ half-twisted bands. By Lemma \ref{lem:4ballK}, $g_4(K)=2$, so when $q=3p-1$, the right-hand side of the above inequality is $\frac{p(3p-1)}{2}+1$. Hence
\[ \frac{p(3p-1)}{2}+1 \leq \nu^+(K_{p, 3p-1}) \leq g_4(K_{p, 3p-1}) \leq \frac{p(3p-1)}{2}+1, \]
so $\nu^+(K_{p, 3p-1}) = g_4(K_{p, 3p-1}) = \frac{p(3p-1)}{2}+1$.
\end{proof}

Note that $\nu^+(K_{p,3p-1})-\tau(K_{p,3p-1})=p+1$ for $K=T_{2,5} \# 2 T_{2,3} \# -T_{2,3; 2,5}$.  This proves Theorem \ref{betterbound}.

A similar argument shows that $\nu^+$ gives a sharp four-ball genus bound for certain other cable knots as well.

\begin{prop}
Let $K$ be a knot with $\nu^+(K)=g_4(K)=n$, then
$$\nu^+(K_{p, (2n-1)p-1})=g_4(K_{p, (2n-1)p-1})=\frac{p((2n-1)p-1)}{2}+1.$$

\end{prop}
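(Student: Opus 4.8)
The plan is to follow the proof of Proposition \ref{prop:nuexamples} almost verbatim, replacing the specific knot $T_{2,5} \# 2T_{2,3} \# -T_{2,3;2,5}$ (for which $\nu^+ = g_4 = 2$) by the general hypothesis $\nu^+(K) = g_4(K) = n$, and the cabling parameter $q = 3p-1 = (2\cdot 2 - 1)p - 1$ by $q = (2n-1)p - 1$. First I would record the input-free structural fact: since $\nu^+(K) = n$ and the $V_k(K)$ are non-negative, non-increasing integers by \eqref{eqn:monotonicity} with $\nu^+(K)$ the least $k$ at which $V_k$ vanishes, we have $V_0(K), V_1(K), \ldots, V_{n-1}(K) > 0$ while $V_n(K) = 0$.

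For the lower bound I would invoke the reducible surgery $S^3_{pq}(K_{p,q}) \cong S^3_{q/p}(K) \# L(p,q)$ and apply the surgery formula \eqref{Corr} twice, once to $K$ and once to the unknot (so that $K_{p,q}$ becomes $T_{p,q}$), exactly as in the display leading to Equation \eqref{eq1}. Subtracting the two and using $V_i(T_{p,q}) \geq 0$ yields, for all $i \leq \tfrac{pq}{2}$,
\[ V_i(K_{p,q}) \geq \max\left\{V_{\lfloor \frac{p_1(i)}{p}\rfloor}(K),\; H_{\lfloor \frac{p_1(i)-q}{p}\rfloor}(K)\right\}, \]
where $p_1(i)$ ranges over $\{0, 1, \ldots, q-1\}$. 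The crux is to show this maximum is strictly positive for every such $i$, and this is precisely where the choice $q = (2n-1)p - 1$ enters. When $0 \leq p_1(i) < np$ we have $\lfloor \tfrac{p_1(i)}{p}\rfloor \in \{0, \ldots, n-1\}$, so the first entry $V_{\lfloor p_1(i)/p\rfloor}(K) > 0$. When $np \leq p_1(i) \leq q - 1 = (2n-1)p - 2$, a short computation of the endpoints gives $p_1(i) - q \in \{-(n-1)p + 1, \ldots, -1\}$, hence $\lfloor \tfrac{p_1(i)-q}{p}\rfloor \in \{-(n-1), \ldots, -1\}$, so by \eqref{eqn:HV} the second entry is $H_{\lfloor (p_1(i)-q)/p\rfloor}(K) = V_{-\lfloor (p_1(i)-q)/p\rfloor}(K) > 0$ since its index lies in $\{1, \ldots, n-1\}$. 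Thus $V_i(K_{p,q}) > 0$ for all $i \leq \tfrac{pq}{2}$; as $\tfrac{pq}{2} = \tfrac{p((2n-1)p-1)}{2}$ is an integer for either parity of $p$, this forces $\nu^+(K_{p,q}) \geq \tfrac{p((2n-1)p-1)}{2} + 1$.

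For the matching upper bound I would reuse the slice-surface construction from Proposition \ref{prop:nuexamples}: taking $p$ parallel copies of a genus-$g_4(K)$ slice surface for $K$ and joining them with $(p-1)q$ half-twisted bands gives $g_4(K_{p,q}) \leq p\, g_4(K) + \tfrac{(p-1)(q-1)}{2}$. Substituting $g_4(K) = n$ and $q = (2n-1)p-1$ and simplifying, the right-hand side collapses to $\tfrac{p((2n-1)p-1)}{2} + 1$. Combining this with the lower bound and the general inequality $\nu^+(K_{p,q}) \leq g_4(K_{p,q})$ squeezes all three quantities together:
\[ \frac{p((2n-1)p-1)}{2} + 1 \;\leq\; \nu^+(K_{p,q}) \;\leq\; g_4(K_{p,q}) \;\leq\; \frac{p((2n-1)p-1)}{2} + 1, \]
giving the claimed equalities.

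The one genuinely new point beyond Proposition \ref{prop:nuexamples}, and the step I expect to demand the most care, is the case analysis on $p_1(i)$ establishing positivity of the maximum. The value $q = (2n-1)p - 1$ is engineered so that as $p_1(i)$ sweeps $\{0, \ldots, q-1\}$, the quotient $\lfloor p_1(i)/p\rfloor$ covers $\{0, \ldots, n-1\}$ on the lower end while $-\lfloor (p_1(i)-q)/p\rfloor$ covers $\{1, \ldots, n-1\}$ on the upper end, so that one of the two indices feeding the maximum always lands in $\{0, \ldots, n-1\}$, where $V$ is positive. I would verify the boundary cases $p_1(i) = np$ and $p_1(i) = q-1$ explicitly to confirm the floors fall in the asserted ranges (noting that for $n=1$ the upper range is empty and only the first case occurs), and confirm that $\tfrac{pq}{2}$ is an integer so that the lower bound is stated with the correct $+1$.
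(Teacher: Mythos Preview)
Your argument is correct and follows the paper's proof essentially verbatim: both derive the inequality $V_i(K_{p,q}) \geq \max\{V_{\lfloor p_1(i)/p\rfloor}(K),\, H_{\lfloor (p_1(i)-q)/p\rfloor}(K)\}$ from the reducible surgery and Proposition~\ref{prop:Corr}, split on whether $p_1(i) < np$ or $p_1(i) \geq np$ to force the maximum positive, and then match the resulting lower bound against the standard cable slice-surface upper bound. Your version is in fact slightly more explicit about the floor ranges and the $n=1$ degeneracy than the paper's.
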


\begin{proof}
 Let $q=(2n-1)p-1$.  We proved $$V_i(K_{p,q}) \geq \max\{V_{\lfloor\frac{p_1(i)}p\rfloor}(K),H_{\lfloor\frac{p_1(i)-q}p\rfloor}(K)\}.$$
We claim that
$$\max\{V_{\lfloor\frac{p_1(i)}p\rfloor},H_{\lfloor\frac{p_1(i)-q}p\rfloor}\}>0.$$  To see this, note that $V_i(K) >0$ for all $i<n$.  When $0\leq p_1(i) <np$, $V_{\lfloor\frac{p_1(i)}p\rfloor}(K)>0$.
Otherwise, $np\leq p_1(i) <q=(2n-1)p-1$, and then $H_{\lfloor\frac{p_1(i)-q}p\rfloor}(K)>0$.  Hence, $V_i(K_{p,q})>0$ for all $i\leq \frac{pq}{2}$.  This implies that
\begin{eqnarray*}
\nu^+(K_{p,q}) &\geq& \frac {pq}{2}+1 \\
&=&  \frac{p((2n-1)p-1)}{2}+1.
\end{eqnarray*}
On the other hand,
\begin{eqnarray*}
 g_4(K_{p, q}) &\leq& pg_4(K)+ \frac{(p-1)(q-1)}{2}\\
&=&pn+\frac{(p-1)((2n-1)p-2)}{2}\\
&=&\frac{p((2n-1)p-1)}{2}+1.
\end{eqnarray*}
So $\nu^+(K_{p, (2n-1)p-1})=g_4(K_{p, (2n-1)p-1})=\frac{p((2n-1)p-1)}{2}+1.$
\end{proof}

We conclude by showing that the knot signature cannot detect the four-ball genus of the knots used in Theorem \ref{betterbound}. Recall that
\[ \frac{1}{2} |\sigma(K)| \leq g_4(K). \]

\begin{prop} \label{prop:sig}
Let $K = T_{2,5} \# 2T_{2,3} \# -T_{2,3; 2,5}$. Then for $p>0$,
\[ \frac{1}{2}|\sigma(K_{p, 3p-1})| +2p-2 \leq g_4(K_{p, 3p-1}). \]
\end{prop}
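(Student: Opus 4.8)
The plan is to reduce the claim to an upper bound on the ordinary signature of the cable. Since Proposition \ref{prop:nuexamples} already gives $g_4(K_{p,3p-1}) = \frac{p(3p-1)}{2}+1$, the asserted inequality $\frac{1}{2}|\sigma(K_{p,3p-1})| + 2p-2 \le g_4(K_{p,3p-1})$ is equivalent, after clearing the denominator, to
$$|\sigma(K_{p,3p-1})| \le 3p^2 - 5p + 6.$$
So it suffices to bound the left-hand side from above. The main tool is Litherland's cabling formula for Tristram--Levine signatures: writing $\sigma_\omega$ for the signature at a point $\omega$ on the unit circle (so that $\sigma = \sigma_{-1}$ and $\sigma_1 \equiv 0$ for every knot), one has
$$\sigma_\omega(J_{m,n}) = \sigma_\omega(T_{m,n}) + \sigma_{\omega^m}(J)$$
for the $(m,n)$-cable of a knot $J$. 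Together with additivity of the signature under connected sum and the identity $\sigma(-J) = -\sigma(J)$, this lets me express everything in terms of torus-knot signatures.

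First I would compute the signature of the fixed knot $K = T_{2,5} \# 2T_{2,3} \# -T_{2,3;2,5}$. Applying Litherland's formula to the $(2,5)$-cable of $T_{2,3}$ gives $\sigma(T_{2,3;2,5}) = \sigma(T_{2,5}) + \sigma_1(T_{2,3}) = -4 + 0 = -4$, whence $\sigma(K) = \sigma(T_{2,5}) + 2\sigma(T_{2,3}) - \sigma(T_{2,3;2,5}) = -4 - 4 + 4 = -4$. Next I would apply Litherland's formula to the $(p,3p-1)$-cable of $K$ to obtain $\sigma(K_{p,3p-1}) = \sigma(T_{p,3p-1}) + \sigma_{(-1)^p}(K)$. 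The correction term $\sigma_{(-1)^p}(K)$ is bounded independently of $p$: it equals $\sigma_1(K) = 0$ when $p$ is even and $\sigma(K) = -4$ when $p$ is odd, so in all cases $|\sigma_{(-1)^p}(K)| \le 4$.

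It then remains to control the dominant term $\sigma(T_{p,3p-1})$. Rather than invoking an exact lattice-point formula, I would use the universal bound $\frac{1}{2}|\sigma(L)| \le g_4(L)$ together with the Milnor conjecture $g_4(T_{p,q}) = \frac{(p-1)(q-1)}{2}$, which yields $|\sigma(T_{p,3p-1})| \le (p-1)(3p-2) = 3p^2 - 5p + 2$. Combining via the triangle inequality, $|\sigma(K_{p,3p-1})| \le (3p^2 - 5p + 2) + 4 = 3p^2 - 5p + 6$, which is precisely the required bound. The step I expect to be the main obstacle is verifying the exact numerical cancellations: pinning down Litherland's formula and the conventions for $\sigma(T_{2,n})$ so that $\sigma(K) = -4$ and the constant correction is exactly $4$. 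Because the target inequality is tight, there is no slack to absorb a sign or bookkeeping error. The conceptual content is simply that the fixed summand $K$ contributes only a bounded constant to the signature of the cable, while the growth on the order of $3p^2$ is governed entirely by the torus knot $T_{p,3p-1}$, whose signature is in turn pinned down by its Seifert genus via the Milnor conjecture.
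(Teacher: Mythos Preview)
Your proposal is correct and follows essentially the same strategy as the paper. The only cosmetic difference is that you invoke Litherland's Tristram--Levine cabling formula $\sigma_\omega(J_{m,n}) = \sigma_\omega(T_{m,n}) + \sigma_{\omega^m}(J)$, whereas the paper cites the $\omega=-1$ special case due to Shinohara (stated as a parity dichotomy in $p$); the remaining steps---computing $\sigma(K)=-4$, bounding $|\sigma(T_{p,3p-1})|\le (p-1)(3p-2)$ via the Milnor conjecture, and comparing with $g_4(K_{p,3p-1})$ from Proposition~\ref{prop:nuexamples}---match the paper exactly.
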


\begin{proof}
We have that $\sigma(T_{2, q}) = 1-q$. By \cite[Theorem 9]{Shinohara},
\[
	\sigma(K_{p,q})=\left\{
	\begin{array}{ll}
	\sigma(T_{p,q}) &\textup{if } p \textup{ is even}\\
	\sigma(K)+\sigma(T_{p,q}) &\textup{if }p \textup{ is odd}.
	\end{array}
	\right.
\]
Thus, $\sigma(T_{2,3; 2,5}) = -4$ and since signature is additive under connected sum,
\begin{align*}
	\sigma(T_{2,5} \# 2T_{2,3} \# -T_{2,3; 2,5}) &= -4 + 2(-2) - (-4) \\
		&= -4.
\end{align*}
We showed in Lemma \ref{lem:4ballK} that $g_4(K) = 2$, so for $K$, the signature is indeed strong enough to detect the four-ball genus. However, we will now show that it is not strong enough to detect the four-ball genus of $K_{p, 3p-1}$. We have that
\begin{align*}
	|\sigma(K_{p, 3p-1})| &\leq |\sigma(K)| + |\sigma(T_{p,3p-1})| \\
		 &\leq 4 + (p-1)(3p-2) \\
		 &= 3p^2-5p+6,
\end{align*}
where the second inequality follows from the fact that when $p, q>0$,
\[ |\sigma(T_{p,q})| \leq 2g_4(T_{p,q}) = (p-1)(q-1) . \]
On the other hand,
\[ 2g_4(K_{p, 3p-1}) = 3p^2 -p +2, \]
so
\[ |\sigma(K_{p, 3p-1})| +4p-4 \leq 2g_4(K_{p, 3p-1}). \]
\end{proof}

Recall from Proposition \ref{prop:nuexamples} that $g_4(K_{p, 3p-1}) = \nu^+(K_{p, 3p-1})$. A consequence of Proposition \ref{prop:sig} is that the gap between $\frac{1}{2}\sigma$ and $\nu^+$ can be made arbitrarily large.

\bibliographystyle{amsalpha}
\bibliography{mybib}

\end{document}